\newcounter{theorem}
\newtheorem{lemma}[theorem]{Lemma}
\newtheorem{prop}[theorem]{Proposition}
\newtheorem{cor}[theorem]{Corollary}
\newtheorem{defn}[theorem]{Definition}
\newtheorem{notation}[theorem]{Notation}
\theoremstyle{remark}
\newtheorem*{remark*}{Remark}
\newtheorem{example}[theorem]{Example}
\numberwithin{equation}{section}
\numberwithin{theorem}{section}
\DeclareMathOperator\type{type}
\DeclareMathOperator\inner{inner}
\DeclareMathOperator\outr{outer}
\DeclareMathOperator\adj{adj}
\newcommand{\defemph}{\emph}
\newcommand{\bigmatrix}[1]{%
\left[ \begin{array}{c|rrr}
#1 & 0 & \cdots & 0 \\
\hline
#1 &  & & \\
\vdots &  & B' & \\
#1 &  & &  \\
\hline
#1 &  & &  \\
\vdots & & -B' & \\
#1 &  & &  
\end{array}\right]}%
\newcommand{\R}{\mathbb{R}}
\newcommand{\Q}{\mathbb{Q}}
\newcommand{\Z}{\mathbb{Z}}
\newcommand{\N}{\mathbb{N}}
\newcommand{\state}{\tau}
\newcommand{\basisb}{{\mathfrak B}}
\renewcommand{\setminus}{\backslash}
\title[Ultrasimplicial groups with unique state]{The ultrasimplicial property for simple dimension groups with unique state, the image of which has rank one}
\author{Gregory R. Maloney}
\address{Newcastle University}
\thanks{This work was supported by the Fields Institute during a research visit.}
\subjclass[2010]{Primary: 06F20, 
20K15 
Secondary: 19K14
}
\keywords{Torsion-free abelian group, dimension group, ultrasimplicial}
\date{\today}
\begin{document}

\begin{abstract}
Let $G$ be an ordered group that is a direct sum of a rank-one torsion-free abelian group and a finite-rank torsion-free abelian group, with order structure arising from the natural order on the first summand.  
A necessary condition and a sufficient condition are given for $G$ to have an ordered-group inductive limit representation using injective maps.  
\end{abstract}

\maketitle

\section{Introduction}\label{SEC:intro}

\begin{defn}\label{DEF:dimension-group}
A countable ordered Abelian group is called a \defemph{dimension group} if it is isomorphic to the inductive limit of a sequence of simplicial groups (direct sums of copies of $\Z$) in the category of ordered Abelian groups.  
\end{defn}

Let us always assume that ordered groups $G$ are directed (if $g_1,g_2\in G$, then there exists $h\in G$ with $g_1,g_2\leq h$) and unperforated (if $ng \geq 0$ for some $n\in \N$, then $g\geq 0$).  
These properties automatically hold for dimension groups, and are frequent assumptions in the literature.  
An excellent reference on ordered abelian groups is \cite{G:book}.  

\begin{defn}\label{DEF:ultrasimplicial}
A dimension group is called \defemph{ultrasimplicial} if it is isomorphic to such an inductive limit in which the maps are injective.  
\end{defn}

It is easy to construct examples of ultrasimplicial groups.  

\begin{example}\label{EX:dyadic}
Let $A$ be an $n\times n$ matrix of non-negative integers.  
Then $A$ represents a morphism from $\Z^n$ to $\Z^n$ in the category of ordered abelian groups, and the inductive limit of the diagram

\centerline{
\xymatrix{
\Z^n \ar@{>}^{A}[r] & \Z^n \ar@{>}^{A}[r] & \Z^n \ar@{>}^{A}[r] & \Z^n \ar@{>}^{A}[r] & \Z^n \ar@{>}^{A}[r] & \cdots \\
}
}
is a dimension group.  
If $A$ has full rank, then it is ultrasimplicial.  
The dyadic rationals $\Z\big[\frac{1}{2}\big]$ arise in this way if we choose $A:\Z\to\Z$ to be multiplication by $2$.  
\end{example}

\begin{example}\cite[Example 2.7]{E:example}.\label{EX:elliott}
The inductive limit

\centerline{
\xymatrix{
\Z^3 \ar@{>}^{A}[r] & \Z^3 \ar@{>}^{A}[r] & \Z^3 \ar@{>}^{A}[r] & \Z^3 \ar@{>}^{A}[r] & \Z^3 \ar@{>}^{A}[r] & \cdots \\
}
}
where the map $A$ is given by left multiplication by the matrix
\[
A = \left[ \begin{array}{ccc}
1 & 1 & 1\\
2 & 1 & 0\\
0 & 1 & 2
\end{array}\right]
\]
is a dimension group that is not ultrasimplicial.  
It is isomorphic to $G := \Z\left[ \frac{1}{3}\right] \oplus \Z$ with order arising from the positive cone $G^+ := \{ (a,b)\in G \ | \ a > 0\} \cup \{ 0\}$.  
\end{example}

The matrix $A$ in Example \ref{EX:elliott} has rank two, so it obviously does not give rise to an injective map.  
Nevertheless, more work is required to show that $(G,G^+)$ is not ultrasimplicial, because it might have some other inductive limit representation in which the maps are injective.  
To see that this does not happen, it is necessary to use the following intrinsic characterization of ultrasimplicial groups.  

\begin{prop}\cite[Proposition 1]{H:ultrasimplicial}.\label{PROP:intrinsic}
A countable ordered Abelian group $G$ is ultrasimplicial if and only if, for all finite $F\subseteq G^+$, there exists a finite independent set $S\subseteq G^+$ such that, for all $f\in F$, $f$ can be written in the form
\[
f = \sum n_is_i
\]
where $n_i\in \Z_{\geq 0}$ and $s_i\in S$.
\end{prop}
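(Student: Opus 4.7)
The plan is to prove the two implications separately, with the converse being the one that demands more care. For the forward implication, assume $G = \varinjlim (\Z^{n_k}, \phi_k)$ with all $\phi_k$ injective, and let $\psi_k \colon \Z^{n_k} \to G$ denote the canonical maps into the limit. Given a finite $F \subseteq G^+$, I would first choose $k$ large enough that every $f \in F$ equals $\psi_k(x_f)$ for some $x_f \in \Z^{n_k}$. The small subtlety is that $x_f$ need not lie in the standard positive cone of $\Z^{n_k}$; however, $\psi_k(x_f) = f \geq 0$ in the limit forces some $\ell \geq k$ with $\phi_{k,\ell}(x_f) \in (\Z^{n_\ell})^+$, and I would take $\ell$ large enough to work for every $f \in F$ simultaneously. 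Then $S = \{\psi_\ell(e_1), \ldots, \psi_\ell(e_{n_\ell})\}$ is $\Z$-linearly independent (because $\psi_\ell$ is injective), sits in $G^+$ (because each $e_i$ is positive in $\Z^{n_\ell}$), and each $f$ is visibly a non-negative integer combination of $S$.

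For the converse, the strategy is to build an inductive system of simplicial subgroups of $G$ inside $G$ itself. Enumerate $G^+ = \{a_1, a_2, \ldots\}$ and $G = \{g_1, g_2, \ldots\}$, and using directedness fix, for each $n$, a decomposition $g_n = p_n - q_n$ with $p_n, q_n \in G^+$. Starting from $S_0 = \emptyset$, I would inductively apply the hypothesis to the finite set $F_k = S_{k-1} \cup \{a_k, p_k, q_k\}$ to obtain a finite independent set $S_k \subseteq G^+$ such that every element of $F_k$ is a non-negative integer combination of $S_k$. Set $H_k$ equal to the $\Z$-span of $S_k$ inside $G$, with simplicial positive cone $H_k^+$ given by non-negative integer combinations of $S_k$; the independence of $S_k$ makes $H_k \cong \Z^{|S_k|}$ as ordered groups.

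The containment $S_{k-1} \subseteq F_k$ then yields an order-preserving inclusion $H_{k-1} \hookrightarrow H_k$ (expand each generator of $H_{k-1}$ and use that non-negative combinations of non-negative combinations are still non-negative combinations), and this inclusion is automatically an injective homomorphism. The limit in the category of ordered abelian groups can be identified with $\bigcup_k H_k$ carrying positive cone $\bigcup_k H_k^+$; matching this with $G$ reduces to two claims, both handled directly by the enumeration: each $p_n, q_n$ eventually lies in some $H_k^+$, forcing $g_n = p_n - q_n \in \bigcup_k H_k$, and every $a_n$ lies in $H_n^+$ by construction, so $G^+ \subseteq \bigcup_k H_k^+$. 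The reverse containments are immediate since $S_k \subseteq G^+$.

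The only real obstacle is in the converse, and it is more of a bookkeeping issue than a conceptual one: one has to enumerate enough of $G^+$ to capture the order, enumerate enough of $G$ (via positive decompositions of its elements) to capture the group structure, and carry the previously chosen $S_{k-1}$ forward into $F_k$ so that the bonding maps are actually well-defined. Once the enumeration is arranged this way, the hypothesis produces the bonding maps for free, with no further compatibility checks required.
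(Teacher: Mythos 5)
Your proof is correct, and both directions are sound: the forward direction correctly handles the lift-positivity subtlety via injectivity of the canonical maps $\psi_\ell$ (which follows from injectivity of the bonding maps), and the converse correctly threads $S_{k-1}$ into $F_k$ so that the inclusions $H_{k-1}\hookrightarrow H_k$ are positive and the limit cone $\bigcup_k H_k^+$ exhausts $G^+$. Note that the paper itself offers no proof --- the proposition is quoted from \cite{H:ultrasimplicial} --- and your argument is essentially the standard one behind that cited result, so there is nothing in the paper to diverge from.
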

We can see that the group $G$ of Example \ref{EX:elliott} is not ultrasimplicial by applying Proposition \ref{PROP:intrinsic} to the elements $(1,0),(1,1),$ and $(1,-1)$.  

The goal here is to study the ultrasimplicial question for other finite-rank dimension groups like this one.  
(Dimension groups are torsion-free, so let us use the word \defemph{rank} in this context to refer to the torsion-free rank of an abelian group.)  
Let us describe how this work fits in the literature on dimension groups.  

\section{Background}\label{SEC:background}

In \cite{E:example}, Elliott introduced the notion of an ultrasimplicial group and showed that all totally ordered abelian groups are ultrasimplicial, and then offered Example \ref{EX:elliott} to show that not all dimension groups are ultrasimplicial.  
Effros and Shen recognized that Elliott's proof that totally ordered groups are ultrasimplicial relied essentially on a continued fraction algorithm, and in \cite{ES:rank-two} they showed how to use the ordinary continued fraction algorithm to construct an explicit inductive limit realizing the ultrasimplicial property for a simple dimension group with two generators and a unique state.  
(A \defemph{state} is a normalized positive homomorphism of an ordered group into the real numbers.)  
In \cite{ES:multidimensional} they used a multi-dimensional continued fraction algorithm to extend this construction from two generators to finitely many generators under the assumption of total order.  

Then in \cite{R:algorithm} Riedel showed, using another continued fraction algorithm, that any simple finitely-generated dimension group with a unique state, the image of which has rank bigger than one, is ultrasimplicial.  
The hypothesis of finite generation in Riedel's theorem can easily be relaxed to finite rank without significantly altering the proof; therefore any simple finite-rank dimension group with unique state, the image of which has rank bigger than one, is ultrasimplicial.  

There is an obvious unanswered question here: what if the image of the unique state has rank equal to one?  
Both the dyadic rationals (Example \ref{EX:dyadic}) and Elliott's example (Example \ref{EX:elliott}) fall into this class, and the former is ultrasimplicial while the latter is not.  

This work provides a partial answer to that question.  
Specifically, let us consider simple finite-rank dimension groups that have unique state, the image of which has rank one, and that satisfy the additional condition that the exact sequence associated to the state splits.  
This work provides a sufficient condition (Proposition \ref{PROP:sufficient}) and a necessary condition (Proposition \ref{PROP:necessary}) for such groups to be ultrasimplicial.  
These conditions involve the divisibility properties of the image and kernel of the unique state.  

The present work takes Riedel's result in \cite{R:algorithm} as a starting point, but there have been other developments since then.  

After proving his result for the case in which there is a unique state with image rank greater than one, Riedel showed in \cite{R:counterexample} that this cannot be extended to simple dimension groups with more than one state; specifically, for any $n>1$ there is a simple dimension group with $n+1$ generators and $n$ extreme states that is not ultrasimplicial.  
There is no overlap between the counterexamples constructed in \cite{R:counterexample} and the groups discussed in the present work, because in the present work there is only one state.  

In \cite{M:lattice-ordered}, Marra showed that every lattice-ordered abelian group is ultrasimplicial.  
The groups discussed here are lattice-ordered only if the kernel of the unique state is trivial, so there is only trivial overlap between this work and Marra's result.  

In \cite{T:vector-spaces}, Tikuisis showed that a finite-dimensional ordered vector space over a subfield of the real numbers is ultrasimplicial.  
There is some overlap between Tikuisis's results and the results here; this overlap consists of the simple finite-dimensional ordered rational vector spaces with unique state.  
But this is only a small subset of the vector spaces discussed by Tikuisis, who does not require simplicity or a unique state.  
It is likewise only a small subset of the groups discussed in this work, which deals with groups that do not necessarily have a vector space structure.  

In \cite{H:almost-ultrasimplicial}, Handelman discussed a broad class of dimension groups, including the ones treated here.  
He called a dimension group \defemph{co-rank one ultrasimplicial} if it can be represented as an ordered-group inductive limit 

\centerline{
\xymatrix{
\Z^n \ar@{>}^{A_1}[r] & \Z^n \ar@{>}^{A_2}[r] & \Z^n \ar@{>}^{A_3}[r] & \Z^n \ar@{>}^{A_4}[r] & \Z^n \ar@{>}^{A_5}[r] & \cdots 
}
}

such that the kernel of any telescoping $A_mA_{m-1}\cdots A_{n+1}A_n$, with $m>n$, has rank at most one.  
He then showed that every simple dimension group with unique state is co-rank one ultrasimplicial by constructing an inductive limit representation using square matrices of the appropriate size with equal column sums.  

The results presented here are different from Handelman's in that they address the question of ultrasimpliciality, not co-rank one ultrasimpliciality, albeit for a smaller class of dimension groups.  
Some of the techniques used here---specifically in the proof of Proposition \ref{PROP:sufficient}---are inspired by techniques appearing in \cite{H:almost-ultrasimplicial}.  

\section{Notation and definitions}\label{SEC:notation}

Let $G$ be a simple finite-rank torsion-free ordered abelian group with unique state $\state : G\to \R$.  
When the state $\state$ is unique, to say that $G$ is simple means that $g\in G^+$ if and only if $g = 0$ or $\state(g)$ is a strictly positive real number.  
Let $K$ denote the kernel of $\state$ and let $H\subset\R$ denote its image.  
$G$ has finite rank, so $K$ and $H$ do as well.  

Suppose that $H$ has rank one, and that the exact sequence

\centerline{
\xymatrix{
0 \ar@{>}[r] & K \ar@{>}[r] & G \ar@{>}^{\state}[r] & H \ar@{>}[r] & 0 
}
}
splits.  
This is the setting of interest in this work.  

Let us give a more precise description of such groups $G$, and at the same time let us fix notation to be used in what follows.  

\begin{notation}\label{NOT:g}
Let $K$ be a finite-rank torsion-free abelian group, and let $H$ be a rank-one torsion-free abelian group.  
$H$ is isomorphic to a subgroup of $\Q$; let $H^+ = H\cap \Q^+$.  
(Different choices of embeddings into $\Q$ yield two such orderings; pick one of them.)  

Let $G$ denote the ordered group that is equal to $H\oplus K$ as a group, and that has order arising from the positive cone
\[
G^+ = \{ (h,k)\in H\oplus K \ | \ h > 0 \} \cup \{ 0\}.
\]
\end{notation}

Every group $G$ of this form is a simple finite-rank torsion-free abelian group with unique state $\state$, the image of which has rank one.  
Moreover, every group satisfying all of those conditions, plus the additional condition that the exact sequence associated to $\state$ split, is isomorphic to some group $G$ of this form.  
Therefore let us henceforth consider only groups $G$ of this form.  

The main question considered here is: which of these groups $G$ are ultrasimplicial?  
If the group $K$ is trivial, then $G$ is totally ordered, in which case it is ultrasimplicial by the result of Elliott \cite{E:example}.  
Therefore let us only consider ordered groups $G$ for which $K$ is non-trivial.   
The results presented here are expressed in terms of certain invariants of torsion-free abelian groups, called type invariants.  
See \cite{A:book} for an exposition of the subject.  

\begin{defn}\label{DEF:height}
Let $A$ be a torsion-free abelian group, let $a\in A$, and let $p\in \Z$ be a prime number.  
The \defemph{$p$-height of $a$ in $A$}, denoted $h_p^A(a)$, is defined to be $n$ if there is an $n\geq 0$ such that $a\in p^nA\setminus p^{n+1}A$, and $\infty$ if no such $n$ exists.  

The \defemph{height of $a$ in $A$} is defined to be the sequence $(h_p^A(a))_{p\in \Pi}$ indexed by the set $\Pi$ of all primes in $\Z$.  
\end{defn}

The set of all height sequences is an ordered set under the relation $\alpha\leq \beta$ if $\alpha_p\leq\beta_p$ for each $p\in\Pi$.  
Any two elements have a supremum and infimum: $\alpha\vee\beta = (\max \{\alpha_p,\beta_p\})_{p\in\Pi}$ and $\alpha\wedge\beta = (\min\{ \alpha_p,\beta_p\})_{p\in\Pi}$, so in fact this is a lattice-ordered set.  

\begin{defn}\label{DEF:type}
Two height sequences $(\alpha_p)$ and $(\beta_p)$ are called \defemph{equivalent} if they differ from each other in a finite number of positions, and for any $p$ if $\alpha_p=\infty$ or $\beta_p = \infty$, then $\alpha_p = \beta_p$.  
It is easy to check that this is an equivalence relation.  

The \defemph{type of $a$ in $A$}, denoted $\type_A(a)$, is the equivalence class of $(h_p^A(a))_{p\in\Pi}$ under this relation.  
\end{defn}

The set of all types is also a lattice-ordered set under the relation induced by the order on the set of all height sequences.  

Note that, in a rank-one group $A$, the heights of any two non-zero elements are equivalent, and so all non-zero elements of $A$ will have the same type.  
Denote this common type by $\type(A)$.  

If $A$ has rank greater than one, then the set of types of elements of $A$ need not be a singleton.  
In such cases, more refined definitions of type have been introduced.  
These definitions make reference to the notion of a \defemph{pure} subgroup.  

\begin{defn}\label{DEF:pure-rank-one}
Let $A$ be a torsion-free abelian group.  
A subgroup $B$ of $A$ is called \defemph{pure} if $na\in B$ implies $a\in B$ for $a\in A,n\in \N$.  

Given a subset $S$ of $A$, the \defemph{pure subgroup generated by $S$}, denoted by $\langle S\rangle_*$, is the subgroup defined as follows.  
\begin{align*}
\langle S\rangle_* & = \{ a\in A \ | \ na\in \langle S\rangle \text{ for some } n\in\Z\}.
\end{align*}
\end{defn}

\begin{defn}\label{DEF:innertype}
Let $A$ be a finite-rank torsion-free abelian group, and let $x_1,\ldots,x_n$ be a maximal independent subset of $A$.  
Then the \defemph{inner type of $A$}, denoted $\inner \type (A)$, is the infimum $\bigwedge_{i=1}^n \type_A(x_i) = \bigwedge_{i=1}^n \type \langle x_i\rangle_*$.  
It is not hard to check that the inner type of $A$ does not depend on the particular choice of maximal independent subset $\{ x_1,\ldots,x_n\}$ \cite[Proposition 1.7]{A:book}.  
\end{defn}

The inner type of $K$ plays a role in Proposition \ref{PROP:sufficient}, which gives a sufficient condition for $G$ to be ultrasimplicial.  
A quantity called the \defemph{outer type} of $K$ plays a similar role in Proposition \ref{PROP:necessary}, which gives a necessary condition for $G$ to be ultrasimplicial.  

\begin{defn}\label{DEF:outertype}
Let $A$ be a finite-rank torsion-free abelian group, and let $x_1,\ldots,x_n$ be a maximal independent subset of $A$.  
Let $Y_i = \langle x_1,\ldots,$ $x_{i-1},$ $x_{i+1},\ldots,x_n\rangle_*$.  
Then $A/Y_i$ is torsion-free of rank-one.  
The \defemph{outer type of $A$}, denoted $\outr \type A$, is the supremum $\bigvee_{i=1}^n \type A/Y_i$.  
It is not hard to check that the outer type of $A$ does not depend on the particular choice of maximal independent subset $\{ x_1,\ldots,x_n\}$ \cite[Proposition 1.8]{A:book}.  
\end{defn}

\section{Results}\label{SEC:results}

The main results rely on the following lemma, which is easy to prove using induction and the Euclidean algorithm.  

\begin{lemma}\label{LEM:euclidean}
If $A$ is a finite-rank torsion-free abelian group and $a_1',\ldots,$ $a_l'\in A$, then there exist independent $a_1,\ldots,a_n\in A$ such that $\langle a_1',\ldots,a_l'\rangle$ $= \langle a_1,\ldots, a_n\rangle$.  
\end{lemma}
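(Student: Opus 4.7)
The plan is to induct on $l$. The case $l=0$ is trivial (take the empty set). For the inductive step, apply the hypothesis to $a_1',\ldots,a_{l-1}'$ to obtain independent $b_1,\ldots,b_m \in A$ with $\langle a_1',\ldots,a_{l-1}'\rangle = \langle b_1,\ldots,b_m\rangle$. If $a_l'$ is independent of $\{b_1,\ldots,b_m\}$, then $\{b_1,\ldots,b_m,a_l'\}$ is already an independent generating set for $\langle a_1',\ldots,a_l'\rangle$. Otherwise, since $A$ is torsion-free, a nontrivial $\Z$-linear relation among the $b_i$'s and $a_l'$ forces $a_l'$ to lie in the $\Q$-span of the $b_i$'s inside $A \otimes \Q$; equivalently, there exist a positive integer $d$ and integers $p_1,\ldots,p_m$ with $d a_l' = \sum_{i=1}^{m} p_i b_i$, and after dividing out by the common factor I may assume $\gcd(d, p_1, \ldots, p_m) = 1$.

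In this dependent case, I plan to apply the Euclidean algorithm to the coefficient tuple $(p_1, \ldots, p_m, -d)$ of the relation $\sum p_i b_i - d a_l' = 0$. Each step of the algorithm, namely subtracting an integer multiple of one coordinate from another, corresponds to an elementary $\Z$-linear change of the generating set $\{b_1,\ldots,b_m,a_l'\}$: replacing a generator $g_j$ by $g_j + k g_i$ changes the coefficient of $g_i$ in the relation from $c_i$ to $c_i - k c_j$, while preserving the subgroup generated. Since the gcd of the coefficients is $1$, repeated reductions terminate in a coefficient tuple with exactly one nonzero entry, which is $\pm 1$. That final relation declares one of the (modified) generators to be zero, so I discard it.

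The remaining $m$ generators still span the original subgroup $\langle a_1',\ldots,a_l'\rangle$, and this subgroup has rational rank exactly $m$ because $a_l'$ was $\Q$-dependent on $b_1,\ldots,b_m$; a set of $m$ generators for a rank-$m$ torsion-free group is automatically independent, completing the induction. The only delicate point is the bookkeeping that tracks how each Euclidean step on the coefficient vector lifts to a concrete change of generators preserving the subgroup---this is the standard mechanism underlying Smith normal form, and I would set it up explicitly to keep the correspondence between row operations on coefficients and substitutions among generators unambiguous.
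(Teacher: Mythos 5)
Your proof is correct and is precisely the argument the paper has in mind: the paper gives no written proof, stating only that the lemma ``is easy to prove using induction and the Euclidean algorithm,'' and your induction on $l$ with Euclidean reduction of the coefficient tuple $(p_1,\ldots,p_m,-d)$ lifted to unimodular substitutions among the generators fleshes out exactly that sketch. The supporting details you supply---that each elementary substitution preserves the generated subgroup, and that $m$ generators of a rank-$m$ torsion-free group are automatically independent---are the right ones and close the argument.
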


The first result gives a sufficient condition for $G$ to be ultrasimplicial.  

\begin{prop}\label{PROP:sufficient}
If $\inner\type K \wedge \type H \neq \type \Z$, then $G$ is ultrasimplicial.  
\end{prop}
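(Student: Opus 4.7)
The plan is to invoke the intrinsic characterization of ultrasimpliciality in Proposition~\ref{PROP:intrinsic}: given a finite $F = \{f_1, \ldots, f_m\} \subseteq G^+$, I must produce a finite independent $S \subseteq G^+$ so that each $f_j$ is a non-negative integer combination of elements of $S$. Writing $f_j = (h_j, k_j)$ with $h_j > 0$, the first step is to set up a common framework: apply Lemma~\ref{LEM:euclidean} to find independent $b_1, \ldots, b_n \in K$ whose $\Z$-span contains all of $k_1, \ldots, k_m$, so that $k_j = \sum_i c_{ji} b_i$ with $c_{ji} \in \Z$, and let $a_0 \in H^+$ generate $\langle h_1, \ldots, h_m \rangle$, giving $h_j = a_j a_0$ with $a_j \in \Z_{>0}$. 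After extending $\{b_i\}$ to a maximal independent subset of $K$, Definition~\ref{DEF:innertype} guarantees that each $b_i$ has type at least $\inner\type K$ in $K$.

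Next I would use the hypothesis $\inner\type K \wedge \type H \neq \type \Z$ to shrink the units. The non-equality to $\type \Z$ provides primes at which $\type H$ and every $\type b_i$ are simultaneously divisible, so one may produce a positive integer $N$, with prime factors drawn from those primes, such that $a_0 \in NH$ and $b_i \in NK$ for every $i$; write $a_0 = N a_0^*$ and $b_i = N b_i^*$, with $a_0^* \in H^+$ and the $b_i^*$ still independent. The natural candidate for $S$ is then the ``standard simplex'' $s_0 = (a_0^*, -\sum_i b_i^*)$ and $s_i = (a_0^*, b_i^*)$ for $i = 1, \ldots, n$: these lie in $G^+$ because $a_0^* > 0$, and a direct determinant computation shows they are $\Z$-linearly independent inside the rank-$(n+1)$ subgroup $\langle a_0^* \rangle \oplus \langle b_1^*, \ldots, b_n^* \rangle$. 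Expanding $f_j = \sum_{i=0}^n n_{ji} s_i$ forces $n_{ji} = n_{j0} + N c_{ji}$ for $i \geq 1$ together with $(n+1)\, n_{j0} = N(a_j - \sum_i c_{ji})$, so for $N$ large the non-negativity conditions become automatic and the remaining question is the integrality of $n_{j0}$.

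The principal obstacle is precisely this integrality condition, since the primes supplied by the hypothesis need not include the prime factors of $n+1$, and $N$ therefore cannot in general be chosen divisible by $n+1$. To overcome it I would perturb the simplex, replacing $s_0$ by $(a_0^*, -\sum_i \beta_i b_i^*)$ for carefully chosen weights $\beta_i \in \Z_{>0}$ and allowing the $s_i$ to carry varying positive $H$-coefficients, so that the denominator $n+1$ is replaced by a quantity tailored to the divisibility the hypothesis actually affords. Simultaneously, the freedom in Lemma~\ref{LEM:euclidean} to change the basis $\{b_i\}$ can be exploited to realign the residues $\sum_i c_{ji}$ modulo this new denominator. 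Checking that the perturbed configuration stays in $G^+$, remains $\Z$-independent, and supports the desired non-negative integer expression for each $f_j$---in the spirit of the column-sum constructions in \cite{H:almost-ultrasimplicial}---will be the technical heart of the proof.
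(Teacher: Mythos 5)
Your skeleton matches the paper's: you invoke Proposition~\ref{PROP:intrinsic}, consolidate the data with Lemma~\ref{LEM:euclidean}, extract from $\inner\type K\wedge\type H\neq\type\Z$ a common divisor $N$ (the paper's $P_N=\prod_{i=1}^Np_i$) of the generator of $H$ and of the independent elements of $K$, and you correctly identify integrality of the coefficients as the crux. But two things go wrong. First, your claim that ``for $N$ large the non-negativity conditions become automatic'' is false: in your configuration the coordinate vector of $f_j$ with respect to $(a_0^*,b_1^*,\ldots,b_n^*)$ is $N\cdot(a_j,c_{j1},\ldots,c_{jn})$, so the solution scales linearly in $N$ and the sign of $n_{j0}=N\bigl(a_j-\sum_i c_{ji}\bigr)/(n+1)$ does not depend on $N$ at all; whenever $\sum_i c_{ji}>a_j$ (which the order on $G$ in no way forbids, since the state ignores the $K$-coordinate) this coefficient is negative for every $N$. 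Positivity must come from a different mechanism: the simplex has to be stretched in the $K$-directions by large, mutually comparable weights, which is exactly the paper's Lemma~\ref{LEM:matrix-inverse} with its hypothesis $\frac{1}{2}M_i'\leq M_j'\leq 2M_i'$ and its cofactor estimates.

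Second, your closing paragraph defers precisely the content of the proof. Replacing the denominator $n+1$ ``by a quantity tailored to the divisibility the hypothesis actually affords'' is the right idea---the paper arranges the index of the perturbed simplex, $\det M=\prod_i M_i+\sum_i M_i'\prod_{j\neq i}M_j$ (Equation~\ref{EQ:M}), to equal $P_N$ exactly, so that dividing the simplex generators by the index lands back in $G^+$---but showing that this Diophantine form actually represents $P_N$ with all weights larger than an arbitrary $L$ \emph{and} balanced is a genuine argument (the paper's Lemma~\ref{LEM:factorization}: split off a factor $f_0$ of $P_N$ dominating $\sum_i\prod_{j\neq i}f_j$ and distribute $f_0-1$ proportionally among the $F_i$), and it must hold simultaneously with the largeness needed for positivity. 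As written, your proposal names the obstruction and points in the correct direction, but both pillars---the positivity estimate and the factorization of $P_N$ into the required form---are absent, and the one quantitative claim you do make (automatic non-negativity for large $N$) is incorrect.
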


\begin{proof}
Use Proposition \ref{PROP:intrinsic}.  
Pick $(h_1',k_1'),\ldots,(h_l',k_l')\in G^+$, and let us find independent positive elements that generate these as non-negative integer combinations.  
By the definition of $G^+$, we must have $h_1',\ldots,h_l'\in H^+$.  

By Lemma \ref{LEM:euclidean} there is an element $h\in H$ with $\langle h_1',\ldots,h_l'\rangle = \langle h\rangle$.  
Since $H$ is directed and has rank one, we must have $h\in H^+$ or $-h\in H^+$, so, by taking $-h$ if necessary, we may suppose $h\in H^+$.  
Likewise there are $n$ elements $k_1,\ldots,k_n\in K$ such that $\langle k_1',\ldots,k_l'\rangle$ $ = \langle k_1,\ldots,k_n\rangle$.  

If $\inner \type K\wedge \type H \neq \type\Z$, then there is an infinite sequence of possibly repeated primes $p_1,p_2,\ldots$ such that, for all $N\in\N$, the elements $h,k_1,\ldots,k_n$ are all divisible by $P_N := \prod_{i=1}^Np_i$---that is, $h\in P_NH$ and $k_1,\ldots,k_n\in P_N K$.  
Because $G$ is torsion free, non-zero elements that are divisible by $P_N$ are uniquely divisible by $P_N$; that is, if $P_Ng_1=g_0$ and $P_Ng_2=g_0$, then $g_1 = g_2$.  
Therefore we may speak without ambiguitiy of the elements $\frac{1}{P_N}h,$ $\frac{1}{P_N}k_1,\ldots, \frac{1}{P_N}k_n$.  
We can use these elements, with $N$ sufficiently large, to produce positive independent elements satisfying the requirements of Proposition \ref{PROP:intrinsic}.  

Let $M_1,\ldots,M_n,M_1',\ldots,M_n'$ be positive integers, to be determined later.  
Define elements $m_0,m_1,\ldots,m_n\in K$ by $m_i := M_ik_i$ for $1\leq i \leq n$ and $m_0 := -M_1'k_1-\cdots -M_n'k_n$.  
Then the positive elements $(h,m_0),(h,m_1),\ldots,(h,m_n)$ generate a subgroup of $\langle (h,0),(0,k_1),\ldots,$ $(0,k_n)\rangle$ of index $|\det(M)|$, where
\begin{align}
M & := \left[ \begin{array}{rrrrr}
1 & 1 & 1 & \cdots & 1 \\
-M_1' & M_1 & 0 & & \\
-M_2' & 0 & M_2 & & \\
\vdots & & & \ddots & \\
-M_n' & & & & M_n
\end{array}\right].\label{EQ:M}
\end{align}

Then the following statements, both proved in lemmas below, are sufficient to prove the claim.
\begin{enumerate}
\item  Suppose that, for all $i,j\leq n$, $\frac{1}{2}M_i' \leq M_j'\leq 2M_i'$.  
Pick $(h',k')\in G^+$.  
If $M_1,\ldots,M_n,M_1',\ldots,M_n'$ are sufficiently large, then we can express $(h',k')$ as a non-negative rational combination of $(h,m_0),\ldots,(h,m_n)$; that is,
\begin{align*}
q(h',k') & = q_0(h,m_0) + \cdots + q_n(h,m_n)
\end{align*}
for some non-negative integers $q,q_0,\ldots,q_n$.  
The proof of this statement appears in Lemma \ref{LEM:matrix-inverse}.  
\item  Let $L\in\N$ be arbitrary.  
Then if $N$ is sufficiently large, we can choose $M_1,\ldots,M_n,M_1',\ldots,M_n'$, all larger than $L$, with $\frac{1}{2}M_j'\leq M_i'\leq 2M_j'$ for all $i,j\leq n$, such that $|\det(M)| = P_N$.  
The proof of this statement appears in Lemma \ref{LEM:factorization}.  
\end{enumerate}
The result follows from this because, by statement 1, we can find some positive integer $q$ such that all elements $q(h_1',k_1'),\ldots,q(h_l',k_l')$ can be expressed as non-negative integer combinations of $(h,m_0),\ldots,(h,m_n)$.  
At the same time, by statement 2, we can arrange it so that $P_N$ is the index of $\langle (h,m_0),\ldots,(h,m_n)\rangle$ in $\langle (h,0),(0,k_1),\ldots,(0,k_n)\rangle$.  
Then $q$ divides $P_N$, so $(h_1',k_1'),\ldots,(h_l',k_l')$ are non-negative integer combinations of $\frac{1}{P_N}(h,m_0),\ldots,\frac{1}{P_N}(h,m_n)\in G^+$.  
\end{proof}

Let $M((M_i')_{i=1}^n;(M_i)_{i=1}^n)$ denote the matrix $M$ in Equation \ref{EQ:M}, and let $D((M_i')_{i=1}^n;(M_i)_{i=1}^n)$ denote its determinant.  
Then by expanding along the first row, we obtain
\begin{align*}\label{EQ:det}
D((M_i')_{i=1}^n;(M_i)_{i=1}^n) & = \prod_{i=1}^nM_i + \sum_{i=1}^n M_i'\prod_{j\neq i}M_j > 0.
\end{align*}

Let us now prove the two lemmas that were used in the proof of Proposition \ref{PROP:sufficient}.  

\begin{lemma}\label{LEM:matrix-inverse}
Let $M_1,\ldots,M_n,M_1',\ldots,M_n'$ be positive integers with the property that $\frac{1}{2}M_i' \leq M_j'\leq 2M_i'$ for all $i,j\leq n$.  
Let $B = (b_0,b_1,\ldots,b_n)^t$ be a column vector of $n+1$ integers with $b_0>0$ and let $X = (x_0,x_1,\ldots,$ $x_n)^t$ be the solution to the matrix equation $MX = B,$ where $M$ is defined in Equation \ref{EQ:M}.  
Then there exists $L\in\N$ such that, if $M_i,M_i'>L$ for all $i\leq n$, then all entries of $X$ are non-negative.  
\end{lemma}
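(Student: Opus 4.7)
The plan is to solve $MX = B$ explicitly. The lower $n$ rows give $-M_i'x_0 + M_ix_i = b_i$, so
\[ x_i = \frac{b_i + M_i' x_0}{M_i} \quad (1 \leq i \leq n), \]
and substituting into the first-row equation $x_0 + \sum_{i=1}^n x_i = b_0$ yields the closed form
\[ x_0 = \frac{b_0 - \sum_{j=1}^n b_j/M_j}{1 + \sum_{j=1}^n M_j'/M_j}. \]
So the argument reduces to checking that, for $L$ large, (i) $x_0 \geq 0$, and (ii) $M_i' x_0 + b_i \geq 0$ for each $i \geq 1$.

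Step (i) is immediate: since $b_0$ is a positive integer, choosing $L$ large enough that $\sum_j |b_j|/L < b_0/2$ forces the numerator of $x_0$ to exceed $b_0/2$, while the denominator is manifestly positive. Step (ii) is trivial when $b_i \geq 0$, so the only real task is to bound $M_i' x_0$ from below by $|b_i|$ when $b_i < 0$. This is where the comparability hypothesis $\tfrac{1}{2}M_j' \leq M_i' \leq 2M_j'$ does real work: together with $M_j > L$, it yields $\sum_j M_j'/M_j \leq 2nM_i'/L$, and hence
\[ M_i' x_0 \geq \frac{M_i'\, b_0/2}{1 + 2n M_i'/L} = \frac{b_0/2}{1/M_i' + 2n/L} > \frac{L b_0}{4n+2}, \]
where the last inequality uses $M_i' > L$. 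Since the $b_i$ take only finitely many values, choosing $L$ to exceed both $2\sum_j|b_j|/b_0$ and $(4n+2)\max_i|b_i|/b_0$ handles every coordinate simultaneously.

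The main point, more than any particular computation, is to see why the comparability hypothesis on the $M_j'$ is essential. Without it one could let a single $M_k'$ be enormous while the others stay small; then $\sum_j M_j'/M_j$ would be dominated by $M_k'/M_k$, driving $x_0$ towards zero, and for the small $M_i'$ values the quantity $M_i' x_0$ would fail to beat $|b_i|$. Once that role is identified, the remaining inequalities are elementary and $L$ can be written down explicitly in terms of $b_0, \ldots, b_n$ and $n$.
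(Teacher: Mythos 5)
Your proof is correct, and it takes a genuinely different route from the paper's. The paper solves $MX=B$ via the adjugate formula $X=D^{-1}\adj(M)B$: it computes every family of cofactors of $M$ explicitly (Equations \ref{EQ:top-corner}--\ref{EQ:the-rest}), shows that for large $L$ the first-row cofactor dominates every other cofactor in its column, namely $M(0,i)>nb\,|M(j,i)|$ for $j\geq 1$ with $b=\max_i|b_i|/b_0$, and concludes $Dx_i=\sum_j b_jM(j,i)>0$ by a triangle-inequality estimate; the delicate case there is the diagonal cofactor $M(i,i)$, itself a determinant of the same arrowhead family, and that is precisely where the comparability hypothesis $\frac{1}{2}M_i'\leq M_j'\leq 2M_i'$ is invoked. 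You instead exploit the arrowhead structure of $M$ directly: eliminating $x_i=(b_i+M_i'x_0)/M_i$ and solving the first-row equation gives the closed form for $x_0$, after which only two elementary estimates remain, and your inequality chain checks out (in particular $1/M_i'+2n/L<(2n+1)/L$ does give $M_i'x_0>Lb_0/(4n+2)$, so taking $L$ beyond both $2\sum_j|b_j|/b_0$ and $(4n+2)\max_i|b_i|/b_0$ suffices; invertibility of $M$ is also implicit in your elimination since the coefficient $1+\sum_j M_j'/M_j$ is positive). The comparability hypothesis does the same work in both arguments---in yours it caps $\sum_j M_j'/M_j\leq 2nM_i'/L$ so that $M_i'x_0$ stays bounded below uniformly in $i$, in the paper it controls $M(i,i)$ relative to $M(0,i)$---and both produce an explicit admissible $L$ depending on $B$ and $n$ (the paper takes $L>2n^2b$; yours is of order $nb$, marginally sharper). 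What your route buys is exact formulas in place of one-sided cofactor bounds, no cofactor bookkeeping, and a transparent account of why comparability cannot be dropped: your closing sketch (one huge $M_k'$ drives $x_0$ toward zero, so $M_i'x_0$ loses to $|b_i|$ for the small $M_i'$) does in fact yield genuine counterexamples, e.g.\ $n=2$, $b=(1,0,-1)^t$, $M_1'=(L+1)^3$, $M_1=M_2=M_2'=L+1$.
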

\begin{proof}
For brevity, let us denote $M((M_i')_{i=1}^n;(M_i)_{i=1}^n)$ by $M$ and its determinant by $D$.  
Then we have
\begin{align*}
X & = D^{-1}\adj (M) B,
\end{align*}
where $\adj(M)$ denotes the adjugate of $M$---that is, the transpose of its matrix of cofactors.  

Let $M(i,j)$ denote the $(i,j)$th cofactor of $M((M_i')_{i=1}^n;(M_i)_{i=1}^n)$, where the indices start at $0$.  
The cofactors of the first column are
\begin{equation}
M(0,0)  = \prod_{j=1}^n M_j\label{EQ:top-corner}
\end{equation}
and
\begin{equation}
M(i,0)  = \prod_{j\neq i}M_j,\label{EQ:left-column}
\end{equation}
and the cofactors of the first row are
\begin{equation}
M(0,i)  = M_i'\prod_{j\neq i}M_j.\label{EQ:top-row}
\end{equation}
The cofactors along the diagonal are determinants of the same type as $D$; that is, 
\begin{equation}
M(i,i)  = D((M_j')_{j\leq n, j\neq i};(M_j)_{j\leq n, j\neq i}), \quad i\geq 1.\label{EQ:diagonal}
\end{equation}
The remaining cofactors are
\begin{equation}
M(i,j)  = -M_j'\prod_{l\neq i,j}M_l.\label{EQ:the-rest}
\end{equation}

Let $b = \max_i |b_i|/b_0$, and suppose that each $M_i',M_i$ is greater than $L > 2n^2b$.  
Then using Equations \ref{EQ:top-corner} and \ref{EQ:left-column}, we see that 
\begin{align*}
M(0,0) & = \prod_{j=1}^n M_j \geq L\prod_{j\neq i}M_j = L |M(i,0)| > 2n^2b|M(i,0)|.
\end{align*}

Using Equations \ref{EQ:top-row} and \ref{EQ:the-rest}, we see that
\begin{align*}
M(0,i) & = M_i'\prod_{l\neq i} M_l \geq LM_i'\prod_{l\neq i,j}M_l \geq \frac{1}{2}LM_j' \prod_{l\neq i,j}M_l = \frac{1}{2}L|M(i,j)|
\end{align*}
when $j\neq i$.  

When $1\leq j = i$ the calculation, using Equation \ref{EQ:diagonal}, is more involved.  
\begin{align*}
M(i,i) & = D((M_j')_{j\leq n, j\neq i};(M_j)_{j\leq n, j\neq i}) \\
& = \prod_{j\neq i}M_j + \sum_{j\neq i}M_j'\prod_{l\neq i,j}M_l \\
& \leq \left( \prod_{j\neq i}M_j\right)\left(1 + \frac{1}{L}\sum_{j\neq i}M_j'\right) \\
& = \left(  \frac{M(0,i)}{M_i'}\right)\left(1 + \frac{1}{L}\sum_{j\neq i}M_j'\right) \\
& \leq \left( \frac{M(0,i)}{M_i'}\right) \left(1 + \frac{1}{L}\sum_{j\neq i}2M_i'\right) \\
& =  \frac{M(0,i)}{M_i'}\left(1 + \frac{2}{L}(n-1)M_i'\right) \\
& =  M(0,i)\left(\frac{1}{M_i'} + \frac{2}{L}(n-1)\right) \\
& \leq  M(0,i)\left(\frac{2}{L} + \frac{2}{L}(n-1)\right) \\
& = \frac{2n}{L}M(0,i) < \frac{1}{nb}M(0,i).
\end{align*}

Thus, in all cases, $M(0,i) > nb|M(j,i)|$ for $j\geq 1$, so
\begin{align*}
Dx_i & = b_0M(0,i) + b_1M(1,i) + \cdots + b_nM(n,i) \\
& \geq b_0M(0,i) - n\max_j |b_j|\max_{j\geq 1} |M(j,i)| \\
& > b_0(n\max_j |b_j|/b_0)\max_{j\geq 1}|M(j,i)| - n\max_j|b_j|\max_{j\geq 1}|M(j,i)| = 0.
\end{align*}
\end{proof}

\begin{lemma}\label{LEM:factorization}
Let $p_1,p_2,\ldots$ be an infinite sequence of possibly repeated primes, and let $P_N = \prod_{i=1}^N p_i$.  
Let $L\in\N$ be arbitrary.  
Then there exists $N\in\N$ such that it is possible to express $P_N$ in the form
\begin{equation*}
P_N = \prod_{i=1}^n M_i + \sum_{i=1}^nM_i'\prod_{j\neq i}M_j,
\end{equation*}
where $M_i,M_i'\in\N$ are all greater than $L$, and $\frac{1}{2}M_i'\leq M_j'\leq 2 M_i'$ for all $i,j\leq n$.  
\end{lemma}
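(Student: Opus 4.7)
The plan is to start from the algebraic identity
\begin{equation*}
\prod_{i=1}^n M_i + \sum_{i=1}^n M_i' \prod_{j\neq i} M_j \;=\; \Bigl(\prod_{i=1}^n M_i\Bigr) \Bigl(1 + \sum_{i=1}^n \tfrac{M_i'}{M_i}\Bigr).
\end{equation*}
Writing $M_i' = c_i M_i$ for positive integers $c_i$, the target equation becomes $\sum_{i=1}^n c_i = M_0 - 1$, where $M_0 := P_N / \prod_{i=1}^n M_i$. Accordingly, I will select the $M_i$ as products over the blocks of a partition $\{1,\ldots,N\} = S_0 \sqcup S_1 \sqcup \cdots \sqcup S_n$ of the prime index set, so that $\prod_{i=1}^n M_i$ divides $P_N$ automatically and $M_0 = \prod_{k \in S_0} p_k$; then I will construct suitable positive integers $c_i$.

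To produce the partition, I will split into two exhaustive cases. In the first case, some prime $p$ appears infinitely often in the sequence: I will pick $m$ with $p^m > L$, take $N$ large enough that $p$ occurs at least $nm$ times in $p_1,\ldots,p_N$, place $m$ such occurrences into each of $S_1,\ldots,S_n$, and put the rest of the indices into $S_0$, so that $M_i = p^m$ for $i\geq 1$. In the second case, every prime appears only finitely often, so the sequence contains infinitely many distinct primes, and in particular it contains $n$ distinct primes $q_1,\ldots,q_n > L$. Taking $N$ large enough that all of them appear in $p_1,\ldots,p_N$, I will put one occurrence of $q_i$ into each $S_i$ and the rest of the indices into $S_0$, so that $M_i = q_i$. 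In either case, $M_0 \to \infty$ as $N \to \infty$, while $M_1,\ldots,M_n$ remain fixed.

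With the $M_i$ chosen, I will define $C := (M_0 - 1) / \sum_i (1/M_i)$ and take each $c_i$ to be either $\lfloor C/M_i \rfloor$ or $\lceil C/M_i \rceil$, with the roundings arranged so that $\sum c_i = M_0 - 1$. This is achievable because $\sum_i (C/M_i) = M_0 - 1$ is an integer by construction, so the shortfall $M_0 - 1 - \sum_i \lfloor C/M_i \rfloor$ equals the integer $\sum_i \{C/M_i\}$; rounding up a suitable subset of the $c_i$ from floor to ceiling corrects the sum. Setting $M_i' := c_i M_i$, each such term differs from $C$ by less than $M_i$, so all the $c_i M_i$ lie in the interval $(C - \max_i M_i,\, C + \max_i M_i)$.

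The main obstacle is the inequality $C \geq 3 \max_i M_i$: once it holds, $c_i M_i > 2 \max_i M_i > L$ and any two of them differ by a factor of at most $2$, so every condition of the lemma is met. Using $C \geq (M_0 - 1) \min_i M_i / n$, this reduces to $M_0 \geq 3 n \max_i M_i / \min_i M_i + 1$, where the ratio $\max M_i / \min M_i$ is fixed by the choice of partition (equal to $1$ in the first case, and to $\max q_i / \min q_i$ in the second). Since $M_0 = P_N / \prod_i M_i \to \infty$ with $N$, the inequality holds for all sufficiently large $N$; the only delicate bookkeeping is the rounding argument in the second case, which this same inequality controls.
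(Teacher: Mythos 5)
Your proof is correct and takes essentially the same route as the paper's: factor $P_N$ into $n$ chunks $M_i > L$ times a large cofactor $M_0$, write $M_0 - 1 = \sum_i c_i$ with each $M_i' = c_iM_i$ within $M_i$ of the common target $C = (M_0-1)/\sum_i (1/M_i)$ (the paper's $T = P_N/\overline{f}$, with $F_i$ playing the role of your $c_i$), and deduce the $2$-comparability of the $M_i'$ from $C \geq 3\max_i M_i$, exactly as the paper deduces $2f_i' - f_j' \geq T - 3\max_l f_l > 0$. The only substantive difference is that you explicitly construct, via the two-case partition of the prime indices, the factorization of $P_N$ into $n+1$ factors each exceeding $L$ with dominant first factor, whose existence the paper merely asserts.
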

\begin{proof}
We may suppose that $L>3$.  

If $N$ is very large, then $P_N$ has many large factors.  
In particular we can choose $N$ large enough that $P_N$ can be written as a product of $n+1$ positive factors $f_0\cdot f_1\cdot \cdots\cdot f_n$, each of which is larger than $L$, and the first of which, $f_0$, is larger than $\sum_{i=1}^n\prod_{1\leq j\leq n,j\neq i}f_j$.  

Let $\overline{f}_i = \prod_{1\leq j\leq n,j\neq i}f_j$, and let $\overline{f} = \sum_{i=1}^n \overline{f}_i$.  
Then $f_0 > \overline{f}$, and we can write 
\begin{equation*}
f_0 = 1+\sum_{i=1}^n F_i,
\end{equation*}
where
\begin{equation*}
F_i = \left\lfloor \frac{f_0\overline{f}_i}{\overline{f}}\right\rfloor + \epsilon_i,
\end{equation*}
with $\epsilon_i = 0$ or $1$.  
(Here $\lfloor\cdot\rfloor$ denotes the floor function, or greatest integer function.)  
Perhaps one $\epsilon_i$ will be $-1$, to account for the fact that $\sum_{i=1}^nF_i = f_0-1$, instead of $f_0$.  
So in particular $|F_i-\frac{\overline{f}_if_0}{\overline{f}}| \leq 1$.  

Then
\begin{align*}
P_N & = \prod_{i=0}^nf_i \\
& = \prod_{i=1}^n f_i + (f_0-1)\prod_{i=1}^nf_i \\
& = \prod_{i=1}^nf_i + \sum_{i=1}^nF_i\prod_{j=1}^nf_j \\
& = \prod_{i=1}^nf_i + \sum_{i=1}^nf_i'\overline{f}_i,
\end{align*}
where $f_i' = F_if_i$.  

Taking $M_i = f_i$ and $M_i' = f_i'$, this is the required form for $P_N$; now we need only check that $\frac{1}{2}f_i'\leq f_j'\leq 2f_i'$ for all $1\leq i,j\leq n$.  

But this is not difficult.  
This condition is automatically satisfied if $n = 1$, and if $n > 1$ then
\begin{align*}
|F_i-\frac{\overline{f}_if_0}{\overline{f}}| & < 1 \\
|F_if_i-\frac{\overline{f}_if_if_0}{\overline{f}}| & < f_i \\
|f_i'-\frac{\prod_{j =0}^nf_j}{\overline{f}}| & < f_i.
\end{align*}

Let $T = P_N/\overline{f} = (\prod_{j=0}^nf_j)/\overline{f} > \prod_{j = 1}^n f_j$.  Then $|f_i'-T|<f_i$, and
\begin{align*}
2f_i'-f_j' & = 2(f_i'-T) + T - (f_j'-T) \\
& \geq -2f_i + T - f_j \\
& \geq T - 3\max_{l\geq 1}f_l \\
& > \prod_{l = 1}^n f_l - 3\max_{l\geq 1}f_l >0
\end{align*}
because each $f_l>L > 3$.  
\end{proof}

Proposition \ref{PROP:necessary}, below, says that, if $\outr\type K \wedge \type H = \type \Z$, then $G$ is not ultrasimplicial.  
The hypothesis that $\outr\type K \wedge \type H = \type \Z$ has one particular implication that is used in the proof of Proposition \ref{PROP:necessary}; this implication is proved here in Lemma \ref{LEM:outertype}.  

\begin{lemma}\label{LEM:outertype}
If $\outr\type K\wedge \type H = \type \Z$, then for any maximal independent subset $\{ k_1',\ldots,k_n'\}$ of $K$ we can find non-zero $h'\in H$ such that, if 
\begin{align*}
g := c_1k_1'+\cdots + c_nk_n'
\end{align*}
is an integer combination of $k_1',\ldots,k_n'$ with $g\in DK$ for some $D$ that does not divide $\gcd(c_1,\ldots,c_n)$, then $h'\notin DH$.  
\end{lemma}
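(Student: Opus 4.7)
The plan is to recast the conclusion as a $p$-height condition on $h'$ in $H$ and then construct $h'$ using the hypothesis. First, I would identify the obstructive moduli: the existence of $g=\sum c_i k_i'\in DK$ with $D\nmid\gcd(c_i)$ is equivalent to $K/\langle k_1',\ldots,k_n'\rangle$ containing a nonzero element annihilated by $D$. Primary decomposition shows this happens iff some prime divisor $p$ of $D$ lies in
\[
\mathcal{P} := \{p \text{ prime} : K/\langle k_1',\ldots,k_n'\rangle \text{ has nonzero }p\text{-torsion}\}.
\]
Since $DH\subseteq pH$ whenever $p\mid D$, it suffices to find nonzero $h'\in H$ with $h_p^H(h')=0$ for every $p\in\mathcal{P}$.

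Second, I would link $\mathcal{P}$ to $\outr\type K$. Given $p\in\mathcal{P}$, pick $k\in K\setminus\langle k_1',\ldots,k_n'\rangle$ with $pk=\sum c_i k_i'$; torsion-freeness of $K$ forces some $c_j$ coprime to $p$. Reducing modulo $Y_j$ in the rank-one torsion-free quotient $K/Y_j$ gives $c_j(k_j'+Y_j)=p(k+Y_j)$, and a Bezout combination $a c_j + b p = 1$ then shows $k_j'+Y_j=p\bigl(a(k+Y_j)+b(k_j'+Y_j)\bigr)$, so $h_p^{K/Y_j}(k_j'+Y_j)\geq 1$ and hence $(\outr\type K)_p\geq 1$ at every $p\in\mathcal{P}$. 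Combined with $\outr\type K\wedge\type H=\type\Z$, this forces $(\type H)_p=0$ at all but finitely many $p\in\mathcal{P}$.

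Third, I would construct $h'$ by a Euclidean-style reduction. Pick any nonzero $y\in H$; since $h_p^H(y)$ agrees with a representative of $\type H$ outside a finite set of primes, and $(\type H)_p=0$ at cofinitely many $p\in\mathcal{P}$, the set $S:=\{p\in\mathcal{P}:h_p^H(y)>0\}$ is finite. For each $p\in S$ the $p$-height $h_p^H(y)$ is finite, so $y\in p^{h_p^H(y)}H$; replacing $y$ by $y/p^{h_p^H(y)}\in H$ drops the $p$-height to zero and leaves all $q$-heights for $q\neq p$ unchanged. Iterating over $S$ yields a nonzero $h'\in H$ with $h_p^H(h')=0$ for every $p\in\mathcal{P}$, and hence $h'\notin DH$ for every obstructive $D$.

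The hardest part is the implicit claim in the second step that $h_p^H(y)$ is finite at each $p\in\mathcal{P}$, which is what makes the division in the third step well-defined and terminating. This amounts to ruling out the pathology $(\type H)_p=\infty$ at some $p\in\mathcal{P}$, which is where the ``no infinite entries'' clause in $\outr\type K\wedge\type H=\type\Z$ must be exploited---together with the lower bound $(\outr\type K)_p\geq 1$ at bad primes---and is likely the most delicate place where the hypothesis is used in full force.
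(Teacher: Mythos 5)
Your overall route is the same as the paper's: reduce to $D=p$ prime, pass to the rank-one torsion-free quotients $K/Y_j$, show that a bad combination forces $k_j'+Y_j\in p(K/Y_j)$ for some $j$ (where you use a Bezout identity $ac_j+bp=1$, the paper instead applies Lemma \ref{LEM:euclidean} inside $K/Y_j$; both micro-steps are correct), and then try to arrange that $h'$ has zero $p$-height at every offending prime. The only structural difference is the order of operations: the paper first normalizes $h'$ so that the greatest common divisor of $h'$ and $k_i'+Y_i$ is $1$ for each $i$ and then derives the contradiction, whereas you first isolate the set $\mathcal{P}$ of bad primes via torsion in $K/\langle k_1',\ldots,k_n'\rangle$ (a clean and correct reformulation) and then normalize a chosen $y$ at the finitely many $p\in\mathcal{P}$ where its height is positive.

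The step you flagged as ``the hardest part'' is, however, a genuine gap, and it cannot be closed: the hypothesis does \emph{not} rule out $h_p^H(y)=\infty$ at a prime $p\in\mathcal{P}$. The condition $\outr\type K\wedge\type H=\type\Z$ is a statement about types, i.e., height sequences up to finite modification with matching $\infty$-entries; it forbids $\min(\beta_p,\gamma_p)=\infty$ and forces $\min(\beta_p,\gamma_p)=0$ at cofinitely many $p$, but it tolerates $\beta_p\geq 1$ finite with $\gamma_p=\infty$ at finitely many primes. Your lower bound ``$(\outr\type K)_p\geq 1$ for $p\in\mathcal{P}$'' is really a bound on the height of the particular element $k_j'+Y_j$, which can exceed any type representative at finitely many primes, so no contradiction with the $\infty$-clause arises. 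Concretely: let $K=\Z=\langle e\rangle$ and $H=\Z\big[\frac{1}{3}\big]$ (Elliott's Example \ref{EX:elliott}); then $\outr\type K=\type\Z$, so the hypothesis holds, but for the maximal independent subset $\{k_1'\}=\{3e\}$ we have $g=1\cdot k_1'=3e\in 3K$ with $3\nmid\gcd(1)$, while every non-zero $h'$ lies in $3H=H$. So for this subset no admissible $h'$ exists, and the lemma as stated (``for \emph{any} maximal independent subset'') is false; no argument can complete your third step. You should take some comfort in the fact that the paper's own proof conceals exactly the same defect in the sentence ``By replacing $h'$ if necessary, we may suppose that the common divisor is $d_i=1$'': in the example above $d_1$ is always divisible by $3$, however $h'$ is replaced, because $H$ is $3$-divisible. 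The lemma and its use in Proposition \ref{PROP:necessary} survive if ``for any'' is weakened to ``for some'' maximal independent subset, chosen so that no prime at which $\type H$ has an infinite entry lies in $\mathcal{P}$ (e.g., a basis when $K$ is free), but supplying that choice is an additional argument that neither your proposal nor the paper provides.
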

\begin{proof}
It is enough to prove the claim for $D = p$, a prime.  

For $1\leq i\leq n$ let $Y_i = \langle k_1',\ldots,k_{i-1}',k_{i+1}',\ldots,k_n'\rangle_*$.  
Pick an arbitrary non-zero $h'\in H$.  

If $\outr \type K\wedge \type H = \type \Z$ then, for each $1\leq i \leq n$, the elements $h'\in H$ and $k_i' + Y_i \in K/Y_i$ have a greatest common divisor, that is, an integer $d_i$ that is maximal with the property that $h'\in d_iH$ and $k_i'+Y_i\in d_iK/Y_i$.  
By replacing $h'$ if necessary, we may suppose that the common divisor is $d_i = 1$ for all $1\leq i\leq n$.  

Pick an integer combination $g_0 := c_1k_1'+\cdots + c_nk_n'$ with $p\nmid \gcd (c_1,\ldots,$ $c_n)$, and suppose that $g_0 = pg_1\in pK$.  
This means that $c_ik_i'+Y_i = g_0+Y_i \in pK/Y_i$, and, since $p\nmid \gcd(c_1,\ldots,c_n)$, there is some $1\leq j\leq n$ such that $p\nmid c_j$.  
But then, applying Lemma \ref{LEM:euclidean} to the elements $g_1+Y_j$ and $k_j'+Y_j$ in the rank-one group $K/Y_j$, we obtain $g_2\in K$ with $g_1+Y_j = m_1g_2+Y_j$ and $k_j'+Y_j = m_2g_2+Y_j$.  
Thus 
\begin{align*}
c_jk_j' + Y_j & = pg_1 + Y_j \\
c_jm_2g_2 + Y_j & = pm_1g_2 + Y_j, 
\end{align*}
and therefore, since $K/Y_j$ is torsion-free, $c_jm_2  = pm_1$.  
Since $p\nmid c_j$, we must have $p\mid m_2$.  
But then $k_j+Y_j = m_2g_2+Y_j\in p K/Y_j$, so we conclude that $h'\notin pH$.  
\end{proof}

Using this property we can establish a necessary condition for $G$ to be ultrasimplicial.  

\begin{prop}\label{PROP:necessary}
If $\outr\type K\wedge \type H = \type \Z$, then $G$ is not ultrasimplicial.  
\end{prop}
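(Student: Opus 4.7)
My plan is to apply Proposition \ref{PROP:intrinsic} to a carefully chosen finite subset of $G^+$ and derive a contradiction using Lemma \ref{LEM:outertype}. Fix a maximal independent subset $\{k_1',\ldots,k_n'\}$ of $K$, invoke Lemma \ref{LEM:outertype} to produce $h'\in H^+$ with the stated divisibility property (negating if necessary), and consider
\[
F := \{(h',0)\}\cup\{(h',k_i'),(h',-k_i'):1\le i\le n\}\subseteq G^+.
\]
If $G$ were ultrasimplicial, Proposition \ref{PROP:intrinsic} would supply a finite independent $S\subseteq G^+$ expressing each element of $F$ as a non-negative integer combination. Because $F$ already spans a subgroup of rank $n+1=\rank G$, we must have $|S|=n+1$; write $S=\{s_0,\ldots,s_n\}$, $s_j=(h_j,k_j)$, $h_j>0$.

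Writing $(h',0)=\sum_j a_js_j$ and $(h',\pm k_i')=\sum_j b_{i,j}^{\pm}s_j$ with non-negative coefficients, independence of $S$ gives $b_{i,j}^{+}+b_{i,j}^{-}=2a_j$, so $d_{i,j}:=b_{i,j}^{+}-a_j$ satisfies $|d_{i,j}|\le a_j$, $\sum_j d_{i,j}h_j=0$, and $\sum_j d_{i,j}k_j=k_i'$. Fix $h^*\in H^+$ with $\langle h_0,\ldots,h_n\rangle=\langle h^*\rangle$, write $h_j=e_jh^*$ and $h'=e'h^*$ (so $\gcd_j(e_j)=1$), put $\Lambda:=\{x\in\Z^{n+1}:\sum x_je_j=0\}$, and let $\Phi:\Z^{n+1}\to K$ send $x\mapsto\sum x_jk_j$. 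A short check shows $\ker\Phi\cap\Lambda=\{0\}$, so $\Phi|_\Lambda$ is injective; combined with $a\notin\Lambda$, the rows $a,d_1,\ldots,d_n$ of the $(n+1)\times(n+1)$ matrix $A$ are linearly independent, hence $\det A\ne0$, and $|d_{i,j}|\le a_j$ then forces $a_j>0$ for every $j$ (else a zero column would appear). Let $a_*$ be the primitive non-negative generator of $\ker\Phi$, so $a=\mu a_*$ for some integer $\mu\ge1$, and set $m_*:=\sum_j(a_*)_je_j$, giving $\mu m_*=e'$.

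The main step — and where I expect the chief technical obstacle to lie — is to bridge the matrix invariants with the divisibility properties of $h'$ via the index identity
\[
\bigl[\langle k_0,\ldots,k_n\rangle:\langle k_1',\ldots,k_n'\rangle\bigr] = m_*D,
\]
where $D:=[\Lambda:\langle d_1,\ldots,d_n\rangle]$; this follows from $\Phi^{-1}\langle k_1',\ldots,k_n'\rangle=\langle d_1,\ldots,d_n,a_*\rangle$ and the short exact sequence $0\to\Lambda/\langle d_i\rangle\to\Z^{n+1}/\langle d_i,a_*\rangle\to\Z/m_*\Z\to0$. For each prime $p\mid m_*D$, Cauchy's theorem produces $v\in\langle k_0,\ldots,k_n\rangle$ with $pv\in\langle k_1',\ldots,k_n'\rangle$ but $v\notin\langle k_1',\ldots,k_n'\rangle$; writing $pv=\sum c_ik_i'$ necessarily gives $p\nmid\gcd(c_i)$, so Lemma \ref{LEM:outertype} yields $h'\notin pH$. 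The easy rank-one identity $h_p^H(h')=h_p^{\Z}(e')+h_p^H(h^*)$ then forces $p\nmid e'=\mu m_*$, and in particular $p\nmid m_*$.

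Since no prime can divide $m_*$, we conclude $m_*=1$. The equation $\sum_j(a_*)_je_j=1$ with $(a_*)_j\ge0$ and $e_j\ge1$ then forces $(a_*)_{j_0}=e_{j_0}=1$ for a unique index $j_0$ and $(a_*)_j=0$ otherwise, so $a=\mu a_*$ has $a_j=0$ for every $j\ne j_0$. Since $K$ is non-trivial we have $n\ge1$, so such an index $j$ exists — contradicting $a_j>0$. This contradiction shows $G$ is not ultrasimplicial.
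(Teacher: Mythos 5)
Your proof is correct, but it takes a genuinely different route from the paper's. The two arguments share a skeleton: both test ultrasimpliciality (Proposition \ref{PROP:intrinsic}) against the same $2n+1$ elements $(h',0),(h',\pm k_i')$ with $h'$ supplied by Lemma \ref{LEM:outertype}, and both coordinatize via generators obtained from Lemma \ref{LEM:euclidean}. After that they diverge. The paper assembles the full $(2n+2)\times(n+1)$ coefficient matrix $C$, proves separately that the coordinate matrix $A$ of the independent set has $|\det(A)|>1$ (Lemma \ref{LEM:determinant}, via an interior-of-parallelepiped argument), and then extracts the contradiction from adjugate/cofactor identities, the Cauchy--Binet formula, a unimodularity argument, and the row--column transfer Lemma \ref{LEM:row-column}: it shows $h'\in DH$ while some prime $p\mid D$ admits a combination in $pK$ with $p\nmid\gcd$, contradicting Lemma \ref{LEM:outertype}. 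You avoid all of that machinery. Your symmetrization $b^+_{i,j}+b^-_{i,j}=2a_j$ (which exploits the $\pm k_i'$ symmetry in the same way the paper's Lemma \ref{LEM:determinant} does, via its relation $\sum_j(c_{ij}+c_{n+i,j})B_j=0$) produces signed vectors $d_i$ with $|d_{i,j}|\le a_j$; the index identity $[\langle k_0,\ldots,k_n\rangle:\langle k_1',\ldots,k_n'\rangle]=m_*D$ is established by an elementary lattice exact sequence; Cauchy's theorem localizes the argument at each prime $p$ of the index, where Lemma \ref{LEM:outertype} plus the rank-one height identity forces $p\nmid e'=\mu m_*$, hence $m_*=1$; and the endgame is purely combinatorial ($\sum_j(a_*)_je_j=1$ forces $a$ to be supported on a single coordinate, against $a_j>0$ for all $j$, which you correctly derive from $\det A\neq 0$ and $|d_{i,j}|\le a_j$). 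What each approach buys: yours is more elementary and conceptual---no adjugates, no Cauchy--Binet, and neither Lemma \ref{LEM:determinant} nor Lemma \ref{LEM:row-column} is needed, with the obstruction cleanly localized prime by prime in a finite quotient---while the paper's route handles the composite determinant $D$ in one stroke and isolates the determinant bound as a standalone lemma with independent interest. A few steps you leave implicit are routine and fine: purity of $\ker\Phi$ (from torsion-freeness of $K$) is what yields the primitive generator $a_*$; finiteness of $D$ follows from independence of the $d_i$ in $\Lambda\cong\Z^n$; and the existence of $h^*$ is exactly Lemma \ref{LEM:euclidean} applied in the rank-one group $H$.
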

\begin{proof}
Choose a maximal independet set $\{ k_1',\ldots,k_n'\}$ in $K$, and $h'\in H$ satisfying the conclusion of Lemma \ref{LEM:outertype}.  
By taking $-h'$ if necessary, we may suppose that $h'\in H^+$.  

Suppose that $G$ is ultrasimplicial.  
We will arrive at a contradiction by showing that $h'$ does not satisfy the conclusion of Lemma \ref{LEM:outertype}.  

Consider the $2n+1$ positive elements 
\begin{equation}\label{EQ:elements}
 (h',0),(h',k_1'),\ldots,(h',k_n'),(h',-k_1'),\ldots, (h',-k_n') 
\end{equation} 
in $G$.  
Since $G$ is ultrasimplicial we can find a set $\basisb$ of $n+1$ independent positive elements $\basisb = \{ (h_0,k_0),\ldots,(h_n,k_n)\}$ in $G$ such that the elements \ref{EQ:elements}  can be expressed as non-negative integer combinations of elements of $\basisb$.  

It will be convenient to write these non-negative integer combinations as a system of equations, that is, as a matrix equation.  
For this it will be necessary to introduce some basis elements.  
Use Lemma \ref{LEM:euclidean} to find $\overline{h}\in H$ with $\langle h_0,\ldots,h_n\rangle $ $ = \langle \overline{h}\rangle$ and $\overline{k_1},\ldots,\overline{k_n}\in K$ with $\langle k_0,\ldots,k_n\rangle $ $ = \langle \overline{k_1},\ldots,\overline{k_n}\rangle$.  
As usual, we may suppose that $\overline{h}\in H^+$.  

Say that 
\begin{align*}
k_i' & = \sum_{j=1}^n b_{ij}'\overline{k_j}, \quad  1\leq i \leq n,\\
k_i & = \sum_{j=1}^n b_{ij}\overline{k_j}, \quad 0\leq i\leq n,\\
h' & = a'\overline{h},\\
h_i & = a_i\overline{h}, 0\leq i \leq n,
\end{align*}
where $b_{ij},b_{ij}'\in\Z$ and $a_i,a'\in \Z_{\geq 0}$.  
Then to say that the elements \ref{EQ:elements}  can be expressed as non-negative integer combinations of elements of $\basisb$ means that there exist non-negative integer coefficients $c_{ij}$ with $0\leq i\leq 2n$ and $0\leq j\leq n$ such that the following matrix equation is satisfied.  
\begin{align}
CA & = %
\bigmatrix{a'}%
,\label{EQ:main} \\
\intertext{where}
\label{EQ:C} C & = \left[ \begin{array}{ccc}
c_{00} & \cdots & c_{0n} \\
 & & \\
\vdots & & \vdots \\
 & & \\
c_{2n+1,0} & \cdots & c_{2n+1,2n+1} 
\end{array}\right], \\
\label{EQ:A} A & =  \left[ \begin{array}{cccc}
a_0 & b_{01} & \cdots & b_{0n} \\
\vdots & \vdots & & \vdots \\
a_n & b_{n1} & \cdots & b_{nn} 
\end{array}\right],\\
\label{EQ:B-prime} \text{and }B' & = \left[ \begin{array}{ccc}
b_{11}' & \cdots & b_{1n}' \\
\vdots & & \vdots \\
b_{n1}' & \cdots & b_{nn}' 
\end{array}\right].
\end{align}

We will exploit the fact that the matrix $A$ in Equation \ref{EQ:A} has the property that $|\det(A)|>1$; a proof of this appears in Lemma \ref{LEM:determinant}, below.  

The rest of the proof works by showing that $D$ can be used to produce a contradiction with the conclusion of Lemma \ref{LEM:outertype}.  
Specifically, the following two statements are true.
\begin{enumerate}
\item  $h'\in DH$.
\item  There is an integer combination $g := q_1k_1'+\cdots q_nk_n'$ and a prime $p$ dividing $D$ such that $p\nmid \gcd(q_1,$ $\ldots,q_n)$ and $g\in pK$.  
\end{enumerate}

To prove these statements, let us rearrange Equation \ref{EQ:main} to obtain
\begin{align}
C & = \frac{1}{D} %
\bigmatrix{a'}%
\adj(A),\label{EQ:adjugate} 
\end{align}
where $\adj(A)$ denotes the adjugate of $A$---that is, the transpose of its matrix of cofactors.  

Let us first prove statement 1.  
Let $A(i,j)$ denote the $(i,j)$th cofactor of $A$, and hence, the $(j,i)$th entry of $\adj(A)$.  
Let $B$ denote the $(n+1)\times n$ matrix obtained by removing the first column of $A$.  
For $0\leq j \leq n$ the determinant of the matrix obtained by removing the $j$th row of $B$ is equal to $A(j,0)$.  
Then equating entries in the first row of Equation \ref{EQ:adjugate} implies that $c_{0j} = \frac{1}{D}a'A(j,0)$.  
Since each $c_{0j}$ is an integer, this means that $D\mid a'A(j,0)$ for all $j$.  

Choose a prime $p$ dividing $D$.  
If $p\mid A(j,0)$ for all $j$, then by the Cauchy-Binet formula \cite[Section 4.6]{BW:linear-algebra} the rows of $B$ span a sublattice of $\Z^n$ of index divisible by $p$.  
This means that the group elements $k_0,\ldots,k_n$ generate a subgroup of $\langle \overline{k_1},\ldots,\overline{k_n}\rangle$ of index divisible by $p$, which contradicts the choice of $\overline{k_1},\ldots,\overline{k_n}$.  
Therefore $p$ does not divide all $A(j,0)$, so it divides $a'$.  
Indeed, $p^r$ divides $a'$, where $p^r$ is the maximal power of $p$ dividing $D$.  
Therefore $D\mid a'$ and statement 1 is true.  

To prove statement 2, let ${B^j}'$ denote the $j$th column of $B'$ for $1\leq j \leq n$.  
Then Equation \ref{EQ:adjugate} implies in particular that, for each $1\leq i\leq n$, $D$ divides each entry of the integer combination
\begin{align*}
\sum_{j=1}^n {B^j}' A(i,j).
\end{align*}

Let us show that it is not possible for $D$ to divide each $A(i,j)$ with $j\geq 1$.  
Indeed, if $D$ does divide each $A(i,j)$ with $j\geq 1$, then $D$ divides every entry of every row of $\adj(A)$ except the first row.  
Then we can rewrite the scalar $1/D$ in Equation \ref{EQ:adjugate} as a product of two diagonal matrices as follows.  
\begin{align*}
C & = \bigmatrix{a'}%
\left[ \begin{array}{cccc}
\frac{1}{D} & & & \\
& 1 & & \\
& & \ddots & \\
& & & 1 
\end{array}\right]%
\left[ \begin{array}{cccc}
1 & & & \\
& \frac{1}{D} & & \\
& & \ddots & \\
& & & \frac{1}{D} 
\end{array}\right]%
\adj(A).
\end{align*}

By statement 1, the product of the first two matrices on the right hand side of this equation is an integer matrix.  
The product of the second two matrices is an integer matrix, and since $\det(\adj(A)) = D^{n}$, it is in fact unimodular.  

Let $A_0$ denote the inverse of the product of these last two matrices.  
The entries of $A_0$ have the same signs as the corresponding entries of $A$, so they are positive in the first column.  
Therefore we can rearrange Equation \ref{EQ:adjugate} again to obtain
\begin{align*}
CA_0 & = \bigmatrix{a'/D}%
\end{align*}
But this yields a contradiction with Lemma \ref{LEM:determinant}.  
Specifically, we have expressed $(\frac{a'}{D} h',0),(\frac{a'}{D}h',k_1'),\ldots,(\frac{a'}{D}h',k_n'),$ $(\frac{a'}{D}h',-k_1'),\ldots,(\frac{a'}{D}h',-k_n')$ as non-negative integer combinations of a maximal independent set in $G^+$, the elements of which are in turn expressed via the rows of $A_0$ as integer combinations of $(\overline{h},0),(0,\overline{k_1}),\ldots,(0,\overline{k_n})$.  
Lemma \ref{LEM:determinant} says that $A_0$ has determinant greater than $1$ in modulus, but we have already established that it is unimodular.  
This is a contradiction; therefore $D$ does not divide all $A(i,j)$ with $j\geq 1$.  

Then there is some index $i$ and some prime $p$ dividing $D$ such that $p$ divides every entry of $\sum_{j=1}^nA(i,j){B^j}'$, but $p\nmid \gcd(A(i,1),\ldots,A(i,n))$.  
This is almost what we want, but it is expressed in terms of columns of $B'$ rather than rows.  
Lemma \ref{LEM:row-column}, proved below, takes care of this.  

Let $B_j'$ denote the $j$th row of $B'$.  
Then by Lemma \ref{LEM:row-column}, there is some combination $\sum_{j=1}^n q_jB_j'$ that has every entry divisible by $p$, but $p\nmid\gcd(q_1,\ldots,q_n)$.  
This means that $\sum_{j=1}^n q_jk_j'\in pK$, and so statement 2 is true and the result follows.  
\end{proof}

The following two lemmas are used in the proof of Proposition \ref{PROP:necessary}.  

\begin{lemma}\label{LEM:determinant}
Pick a maximal independent set $k_1',\ldots,k_n'\in K$ and $h'\in H^+$, and suppose that $(h',0),(h',k_1'),\ldots,(h',k_n'),(h',-k_1'),\ldots,(h',-k_n')$ can be written as non-negative integer combinations of some independent elements $(h_0,k_0),\ldots,(h_n,k_n)\in G^+$.  
Let $\overline{h}\in H^+$ be such that $\langle h_0,\ldots, h_n\rangle$ $= \langle \overline{h}\rangle$, and let $\overline{k_1},\ldots,\overline{k_n}\in K$ be such that $\langle k_0,\ldots,k_n\rangle = \langle \overline{k_1},\ldots,\overline{k_n}\rangle$.  

Then the matrix $A$ from Equation \ref{EQ:A} has determinant greater than $1$ in modulus.  
\end{lemma}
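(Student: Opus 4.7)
The plan is to proceed by contradiction: suppose $|\det A|\le 1$ and derive an impossibility. Since $(h_0,k_0),\ldots,(h_n,k_n)$ are independent in the rank-$(n+1)$ group $G$, the matrix $A$ has full rank, so $\det A\ne 0$; I therefore only need to rule out $|\det A|=1$. Observe that $A$ is the change-of-basis matrix expressing $(h_0,k_0),\ldots,(h_n,k_n)$ in the basis $(\overline{h},0),(0,\overline{k_1}),\ldots,(0,\overline{k_n})$ of the rank-$(n+1)$ free abelian group $G_0:=\langle(\overline{h},0),(0,\overline{k_1}),\ldots,(0,\overline{k_n})\rangle$. Both sets of elements lie in $G_0$, so $|\det A|=1$ is equivalent to the statement that the $(h_i,k_i)$ form a $\Z$-basis of $G_0$, in which case every element of $G_0$ has a \emph{unique} integer expansion in these generators.

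First I would analyse the expansion of $(\overline{h},0)$. Write $(\overline{h},0)=\sum_{i=0}^n\gamma_i(h_i,k_i)$; then $(h',0)=a'(\overline{h},0)=\sum_i a'\gamma_i(h_i,k_i)$. By hypothesis $(h',0)$ admits a non-negative integer expansion in the $(h_i,k_i)$, which by uniqueness must coincide with $(a'\gamma_i)_i$. Since $a'>0$, this forces $\gamma_i\ge 0$ for every $i$. Comparing $H$-components in the identity $(\overline{h},0)=\sum\gamma_i(h_i,k_i)$ gives $\sum_i\gamma_i a_i=1$. Each $(h_i,k_i)$ is a non-zero element of $G^+$, so $h_i>0$ in $H$ and therefore $a_i\ge 1$. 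Combined with $\gamma_i\ge 0$, the relation $\sum\gamma_i a_i=1$ forces a unique index $i^*$ with $\gamma_{i^*}=a_{i^*}=1$ and all other $\gamma_i=0$. Consequently $(h_{i^*},k_{i^*})=(\overline{h},0)$, so in particular $k_{i^*}=0$.

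Next I would apply the same uniqueness argument to the positive elements $(h',k_j')$ and $(h',-k_j')$ for $1\le j\le n$. By hypothesis each admits a non-negative integer expansion $\sum_i c_{ij}^{\pm}(h_i,k_i)$, and because the $(h_i,k_i)$ form a $\Z$-basis these expansions are unique. Adding them gives $(2h',0)=2a'(\overline{h},0)=\sum_i(c_{ij}^++c_{ij}^-)(h_i,k_i)$, so $c_{ij}^++c_{ij}^-=0$ for all $i\ne i^*$. Non-negativity then forces $c_{ij}^+=c_{ij}^-=0$ for $i\ne i^*$, and so $(h',k_j')$ is an integer multiple of $(h_{i^*},k_{i^*})=(\overline{h},0)$. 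But the $K$-component of $(h',k_j')$ is $k_j'$, which is non-zero because $k_1',\ldots,k_n'$ is an independent set in the torsion-free group $K$. This contradiction finishes the proof.

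The step requiring care is extracting $(h_{i^*},k_{i^*})=(\overline{h},0)$ from the hypothesis: this is where the non-negativity of the $\gamma_i$ combines with the strict positivity of the $a_i$ to collapse the expansion of $(\overline{h},0)$ onto a single basis element. Once this rigidity is in hand, the same unique-expansion trick applied to $(h',\pm k_j')$ immediately forces $k_j'=0$, contradicting independence.
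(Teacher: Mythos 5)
Your proof is correct, and it takes a genuinely different route from the paper's. You interpret $|\det(A)|$ as the index of $\langle (h_0,k_0),\ldots,(h_n,k_n)\rangle$ in the free group $G_0 := \langle (\overline{h},0),(0,\overline{k_1}),\ldots,(0,\overline{k_n})\rangle$ and rule out index one by a rigidity argument: if the $(h_i,k_i)$ formed a $\Z$-basis of $G_0$, then uniqueness of integer expansions, applied to the given non-negative expansion of $(h',0)$, forces $\gamma_i\geq 0$ in $(\overline{h},0)=\sum\gamma_i(h_i,k_i)$; since each $(h_i,k_i)$ is a non-zero element of $G^+$ one has $a_i\geq 1$, so $\sum\gamma_ia_i=1$ collapses onto a single index with $(h_{i^*},k_{i^*})=(\overline{h},0)$, and then summing the expansions of $(h',k_j')$ and $(h',-k_j')$ annihilates all other coefficients and forces $k_j'=0$, contradicting independence (here $n\geq 1$ because $K$ is assumed non-trivial). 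All the supporting details check out: the elements in question do lie in $G_0$, $\det(A)\neq 0$ follows from independence of the $(h_i,k_i)$ in the rank-$(n+1)$ group $G$, and both expansions you compare are integer expansions of the same element of $G_0$. The paper argues instead geometrically: from $\sum_j c_{0j}a_j=a'$ with all $a_j\geq 1$ it places the vector $(1,0,\ldots,0)$ in the parallelepiped spanned by the rows of $A$, and then upgrades this to interior membership --- which yields $|\det(A)|>1$ --- by showing $c_{0j}\neq 0$ for every $j$; that last step uses the essentially unique linear relation among the $n+1$ vectors $B_j$ spanning the $n$-dimensional row space, together with the fact that the right-hand side of Equation \ref{EQ:main} has rank $n+1$. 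Both arguments consume exactly the same hypotheses (the expansion of $(h',0)$ plus the cancellation from the $\pm k_j'$ pairs), so the difference is one of packaging: your lattice-index rigidity argument is more elementary and self-contained, needing no convexity or volume considerations, while the paper's interior-point formulation makes the role of strict positivity of the coefficients $c_{0j}$ explicit in a way that mirrors the unimodularity contradiction it later extracts in the proof of Proposition \ref{PROP:necessary}; either proof of the lemma serves that later application equally well, since only the statement is invoked there.
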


\begin{proof}
The proof refers to the matrices in Equations \ref{EQ:main}, \ref{EQ:C}, \ref{EQ:A}, and \ref{EQ:B-prime}.  

Note that $\sum_{j=0}^nc_{0j}a_j = a'\in\N$, and because each $a_j\geq 1$ we can conclude that $a'>N := \sum_{j=0}^n c_{0j}$.  
Then $\sum_{j=0}^n \frac{c_{0j}}{N}a_j = \frac{a'}{N} > 1$, and the coefficients $\frac{c_{0j}}{N}$ all lie between $0$ and $1$.  
Moreover, $\sum_{j=0}^n \frac{c_{0j}}{N}b_{ji} = \frac{1}{N}\sum_{j=0}^nc_{0j}b_{ji} = 0$.  
Therefore the rows of $A$ span a parallelepiped that contains the row vector $(1,0,\ldots,0)$.  

In fact, let us show that this vector lies in the interior of that parallelepiped; this will imply that $|\det(A)|>1$.  
To show that $(1,0,\ldots,0)$ lies in the interior of the parallelepiped spanned by the rows of $A$, it is necessary to show that, for all $j$, $c_{0j}\neq 0$. 

For $1\leq j\leq n$ let $B_j'$ denote the $j$th row of $B'$ (Equation \ref{EQ:B-prime}), and for $0\leq j\leq n$ let $B_j$ denote the row vector obtained by dropping the first entry from the $j$th row of $A$: $B_j = (b_{j1},\ldots,b_{jn})$.  
Then for $1\leq i\leq n$,
\begin{align*}
\sum_{j=0}^nc_{ij}B_{j} & = B_{j}' & \text{and}\quad  \sum_{j=0}^nc_{n+i,j}B_{j} & = -B_{j}',
\end{align*}
therefore
\begin{align*}
\sum_{j=0}^n(c_{ij}+c_{n+i,j})B_{j} & = 0 = \sum_{j=1}^n c_{0j}B_j.
\end{align*}

These are both relations between the $n+1$ vectors $B_j, 0\leq j\leq n$.  
But these $n+1$ vectors must span the row space of $B'$, which we have chosen to be $n$-dimensional; therefore there is a unique relation between them up to scalar multiplication.  
Thus if $c_{0j} = 0$ then $c_{ij}+c_{n+i,j} = 0$, which implies that $c_{ij}$ and $c_{n+i,j}$ are both $0$ since they are both non-negative.  
This means that if any $c_{0j} = 0$, then the entire $j$th column of $C$ is $0$, which implies that $C$ has rank less than $n+1$.  
Since the right hand side of Equation \ref{EQ:main} was chosen to have rank $n+1$, this is not possible.  
Therefore for all $j$, $c_{0j}\neq 0$, and so $|\det (A)| > 1$.  
\end{proof}

\begin{lemma}\label{LEM:row-column}
Let $M$ be an $n\times n$ integer matrix and let $p$ be a prime.  
Let $M_j$ denote the $j$th row of $M$ and let $M^j$ denote its $j$th column.  
Then the following are equivalent.  
\begin{enumerate}
\item  There is an integer combination $\sum_{j=1}^n r_jM_j$ such that every entry is divisible by $p$ but $p\nmid \gcd(r_1,\ldots,r_n)$.  
\item  $p\mid \det(M)$.  
\item  There is an integer combination $\sum_{j=1}^n s_jM^j$ such that every entry is divisible by $p$ but $p\nmid \gcd(s_1,\ldots,s_n)$.  
\end{enumerate}
\end{lemma}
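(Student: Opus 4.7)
The plan is to reduce everything modulo $p$ and appeal to standard linear algebra over the field $\mathbb{F}_p = \Z/p\Z$. Let $\overline{M}$ denote the matrix obtained from $M$ by reducing each entry mod $p$.

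First I would translate each of the three conditions into a statement about $\overline{M}$ viewed as a matrix over $\mathbb{F}_p$. Condition (1) is equivalent to saying that the rows of $\overline{M}$ are linearly dependent over $\mathbb{F}_p$: given integers $r_j$ with $p\nmid\gcd(r_1,\ldots,r_n)$ and $p\mid$ every entry of $\sum r_jM_j$, the reductions $\overline{r_j}\in\mathbb{F}_p$ are not all zero and satisfy $\sum\overline{r_j}\,\overline{M_j}=0$; conversely, any non-trivial $\mathbb{F}_p$-linear dependence among the rows of $\overline{M}$ lifts to integers $r_j$ with the required properties. Condition (3) translates in the same way to linear dependence of the columns of $\overline{M}$. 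Condition (2) is equivalent to $\det(\overline{M})=0$ in $\mathbb{F}_p$, because the determinant is a polynomial expression in the entries.

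Once these translations are in place, the equivalence of the three conditions is just the standard fact that, for a square matrix over a field, the rows are linearly dependent if and only if the determinant vanishes if and only if the columns are linearly dependent. Both implications follow from basic rank considerations: a square matrix over a field fails to be invertible precisely when its rows (equivalently, its columns) do not form a basis, which happens precisely when its determinant is zero.

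There is no real obstacle here; the lemma is essentially a restatement of a standard fact from linear algebra phrased in the integer setting, and the only point that requires care is making sure the lift from $\mathbb{F}_p$-coefficients back to integer coefficients preserves the non-divisibility of the $\gcd$ (which is automatic, since if a lift $r_j$ of a non-zero $\overline{r_j}$ is chosen in $\{0,1,\ldots,p-1\}$, then $p\nmid r_j$).
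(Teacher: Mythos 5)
Your proof is correct, but it takes a genuinely different route from the paper's. You reduce everything modulo $p$ and invoke the standard fact that a square matrix over the field $\Z/p\Z$ has vanishing determinant if and only if its rows (equivalently, its columns) are linearly dependent; your translations of (1) and (3) into non-trivial dependences over $\Z/p\Z$, and the lifting back to integer coefficients with $p\nmid\gcd$, are all sound. The paper instead argues integrally: for (1)$\Rightarrow$(2) it rewrites $RM=A$ via the adjugate formula $R = \frac{1}{\det(M)}A\,\adj(M)$ and uses integrality of $\adj(M)$, and for (2)$\Rightarrow$(1) it uses the lattice fact that $\det(M)\,\Z^n$ is contained in the row lattice $\langle M_1,\ldots,M_n\rangle$, which is proper when $p\mid\det(M)$; the equivalence with (3) then comes from transposing. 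Your approach buys uniformity and symmetry---the case $\det(M)=0$ needs no separate treatment (the paper's division by $\det(M)$ tacitly assumes it is non-zero, though that case is trivial for (2) and easy for (1)), and (1)$\Leftrightarrow$(2)$\Leftrightarrow$(3) all fall out of one field-theoretic fact. The paper's argument, for its part, stays in the adjugate-and-sublattice language that the surrounding proof of Proposition \ref{PROP:necessary} already uses (compare Equation \ref{EQ:adjugate} and the Cauchy--Binet index argument there), so it is stylistically of a piece with its application, whereas yours is the more economical standalone proof.
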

\begin{proof}
Let $R$ denote the row vector with entries $r_j$ and $A$ denote the integer combination in (1).  
Then the fact that (1) implies (2) is easily verified by rewriting the matrix equation $RM = A$ using the adjugate formula: $R = 1/(\det(M)) A\adj(M)$ and observing that $\adj(M)$ is an integer matrix.  

The fact that (2) implies (1) can be seen by noting that the rows of $M$ generate (as a group) a proper sublattice of $\Z^n$.  
If $A$ is a row vector in $\Z^n\setminus \langle M_1,\ldots,M_n\rangle$, then $\det(M)A$ is in $\langle M_1,\ldots,M_n\rangle$, so $\det(M)A = RM$ for some integer row vector $R$.  
If the entries of $R$ were all divisible by $\det(M)$, then $A$ would lie in $\langle M_1,\ldots,M_n\rangle$.  

The equivalence of (2) and (3) follows by applying the same reasoning to the matrix $M^t$.  
\end{proof}

In certain special cases the conditions in Propositions \ref{PROP:sufficient} and \ref{PROP:necessary} can be expressed more simply.  

\begin{cor}\label{COR:direct-sum}
If $K$ is a direct sum of rank-one groups, then $G$ is ultrasimplicial if and only if $\inner \type G \neq \inner\type \Z$.  
\end{cor}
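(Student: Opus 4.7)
The plan is to make $\inner\type G$ concrete under the direct-sum hypothesis, observe that the easy direction of the corollary is an immediate instance of Proposition~\ref{PROP:sufficient}, and to handle the other direction by passing to a rank-one ``diagonal'' subgroup of $K$ so as to apply Proposition~\ref{PROP:necessary}.

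Because $K = \bigoplus_{i=1}^{n}K_{i}$ with each $K_{i}$ of rank one, a maximal independent subset of $K$ may be chosen by taking one non-zero element of each $K_{i}$, giving $\inner\type K = \bigwedge_{i=1}^{n}\type K_{i}$. Adjoining a non-zero element of $H$ produces a maximal independent subset of $G$, so $\inner\type G = \type H \wedge \inner\type K$. Thus $\inner\type G \neq \type\Z$ is exactly the hypothesis of Proposition~\ref{PROP:sufficient}, which gives the ``if'' direction.

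For the converse, suppose $\inner\type G = \type\Z$; I would show $G$ is not ultrasimplicial. Choose a generator $k_{i}$ of each $K_{i}$ and set $v := k_{1} + \cdots + k_{n}$. Since $\lambda v \in K$ iff $\lambda k_{i} \in K_{i}$ for every $i$, the pure rank-one subgroup $K' := \langle v\rangle_{*}$ is canonically identified with $\bigcap_{i} K_{i}$, and its type is $\bigwedge_{i}\type K_{i} = \inner\type K$. Hence $\type K' \wedge \type H = \inner\type G = \type\Z$, so the ordered subgroup $G' := H \oplus K'$ (with the order induced from $G$) has rank-one kernel $K'$ and satisfies the hypothesis of Proposition~\ref{PROP:necessary}; accordingly $G'$ is not ultrasimplicial.

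To complete the argument, I would show that ultrasimpliciality of $G$ must descend to the ordered subgroup $G'$, which produces the desired contradiction. By Proposition~\ref{PROP:intrinsic} applied to $G'$, given any finite $F \subseteq G'^{+} \subseteq G^{+}$, ultrasimpliciality of $G$ supplies an independent positive set $S \subseteq G^{+}$ whose non-negative integer combinations generate $F$. Using the direct-sum decomposition of $K$ together with the condition that $\sum_{j} c_{f,j} k(s_{j}) \in K'$ for every $f \in F$, one should be able to modify $S$ into an independent positive set contained in $G'^{+}$, with Lemma~\ref{LEM:euclidean} used to restore independence after the modification. The main obstacle will be that $K'$ is in general not a direct summand of $K$, so the modification cannot be achieved by a naive coordinate projection; the argument must carefully exploit the positivity constraints on $S$ and the relations imposed by $F \subseteq G'$ together with the direct-sum structure of $K$.
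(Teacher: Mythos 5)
Your ``if'' direction is correct and agrees with the paper's: each summand $K_i$ is pure in $K$, so $\inner\type G=\type H\wedge\bigwedge_{i=1}^{n}\type K_i$, and Proposition~\ref{PROP:sufficient} applies. Your computation of the type of the diagonal element is also right: the height sequence of $v=k_1+\cdots+k_n$ in $K$ is the pointwise minimum of the height sequences of the $k_i$, so $\type\langle v\rangle_*=\inner\type K$ (two slips of wording aside: the $K_i$ need not be cyclic, so ``generator'' should be ``non-zero element''; and inside $K$ the intersection $\bigcap_i K_i$ is trivial---you mean the intersection of the corresponding subgroups of $\Q$). The genuine gap is the final step. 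You assert, but do not prove, that ultrasimpliciality of $G$ descends to the pure order subgroup $G'=H\oplus\langle v\rangle_*$, and your own phrasing (``one should be able to modify $S$'') concedes that this is missing. It is not a routine verification; it is where the whole difficulty of the converse sits. Ultrasimpliciality does not pass to order subgroups in general: the paper's Example~\ref{EX:non-splitting} exhibits the non-ultrasimplicial group of Example~\ref{EX:elliott} as a finite-index order subgroup of an ultrasimplicial group, so any descent lemma would have to exploit the purity of $G'$ in $G$ in an essential way, and you offer no mechanism. Indeed, as you half-observe, there is no retraction $K\to K'$ available to ``modify'' $S$: for $K=\Z\big[\tfrac{1}{2}\big]\oplus\Z\big[\tfrac{1}{3}\big]$ and $v=(1,1)$ one has $K'=\langle v\rangle_*\cong\Z$ and $K/K'\cong\Z\big[\tfrac{1}{6}\big]$, a type realized by no rank-one subgroup of $K$; hence $K'$ is not a summand, no projection exists, and no alternative construction preserving independence, positivity, and the non-negative integer representations of $F$ is given.

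The paper's own proof involves no passage to subgroups: it argues that for $K$ a direct sum of rank-one groups one has $\inner\type K=\outr\type K$, so that the failure of the hypothesis of Proposition~\ref{PROP:sufficient} is exactly the hypothesis of Proposition~\ref{PROP:necessary} applied to $G$ itself; combined with the identity $\inner\type K\wedge\type H=\inner\type G$ (which you also prove), the two propositions yield the equivalence directly. Note, for perspective, that when the types $\type K_i$ are not all equal, $\type K'=\bigwedge_i\type K_i$ lies strictly below $\bigvee_i\type K_i=\outr\type K$, so in that regime your completed argument would conclude non-ultrasimpliciality of $G$ under a hypothesis strictly weaker than that of Proposition~\ref{PROP:necessary}; that your reduction would prove strictly more than the paper's necessary condition gives for $G$ is a further warning that the descent step cannot simply be waved through. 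As it stands, your argument establishes only the ``if'' direction of the corollary.
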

\begin{proof}
If $K$ is a direct sum of rank-one groups, then $\inner\type K = \outr\type K$, so Propositions \ref{PROP:sufficient} and \ref{PROP:necessary} combine to say that $G$ is ultrasimplicial if and only if $\inner\type K \wedge \type H \neq \type \Z$.  
And $\inner\type K\wedge \type H = \inner\type G$.  
\end{proof}

Corollary \ref{COR:direct-sum} yields the following example as a special case.  

\begin{example}\label{EX:direct-sum}
Let $m_0,\ldots,m_n$ be non-zero integers, let $H = \Z\big[ \frac{1}{m_0}\big]$, and let $K = \Z\big[ \frac{1}{m_1}\big] \oplus \cdots \oplus \Z\big[ \frac{1}{m_n}\big]$.  
Then $G$ is utrasimplicial if and only if $\gcd(m_0,\ldots,m_n) > 1$.  
\end{example}

If $H$ is cyclic, then $\outr\type K \wedge \type H = \type \Z$ for any choice of $K$, so $G$ is not ultrasimplicial.  
But in this case we can say even more: $G$ is not a dimension group.  
Elliott observed in \cite{E:example} that every dimension group $A$ satisfies the Riesz interpolation property: if $x_1,x_2,y_1,y_2\in A$ satisfy $x_i\leq y_j$ for all $i,j$, then there is an element $z$, called an \defemph{interpolant}, such that $x_i\leq z\leq y_j$.  
In \cite{EHS}, Effros, Handelman, and Shen proved a converse to this.  
Specifically, every countable ordered group with Riesz interpolation is a dimension group.  
It is not difficult to check that, if $H$ is cyclic and $K$ is non-trivial, then $G$ does not satisfy the Riesz property, and hence is not a dimension group.

The next example is a group that is does not satisfy the sufficient condition of Proposition \ref{PROP:sufficient} nor the necessary condition of Proposition \ref{PROP:necessary}.  
\begin{example}\cite[Example 2.8]{A:book}.\label{EX:strange}  
Consider the group $V = \Q^n$.  
Let $T := \{ (r_1,\ldots,r_n)\in V \ | \ r_i\in\Z \text{ and } \gcd(r_1,\ldots,r_n) = 1\}$.  
Enumerate the elements of $T$: $T = \{ t_1,t_2,\ldots\}$ and write the set $\Pi$ of primes as an infinite disjoint union of infinite sets $\Pi = \bigcup S_i$.  
Then let $K$ be the subgroup of $V$ generated by all elements of the form $\frac{1}{p_i}t_i$, where $p_i\in S_i$ and let $H$ be a rank-one group with a type that is non-zero at infinitely many primes $p$.  

Then $\inner\type K = \type\Z$ and $\outr\type K = [(m_p)_{p\in\Pi}]$, where $m_p = 1$ for all $p$, so neither Proposition \ref{PROP:sufficient} nor Proposition \ref{PROP:necessary} applies, and it is not clear if the group $G$ is ultrasimplicial or not.  
\end{example}

This final example demonstrates that we cannot remove from Proposition \ref{PROP:necessary} the condition that the exact sequence associated to the unique state split.  

\begin{example}\label{EX:non-splitting}
Consider the inductive limit of the diagram 

\centerline{
\xymatrix{
\Z^2 \ar@{>}^{A}[r] & \Z^2 \ar@{>}^{A}[r] & \Z^2 \ar@{>}^{A}[r] & \cdots 
}
}

in which $A$ has matrix representation $A = \left[ \begin{array}{cc}2 & 1 \\ 1 & 2 \end{array}\right]$.  
The matrix $A$ has right eigenvectors $(1,1)^t$ and $(1,-1)^t$ with eigenvalues $3$ and $1$ respectively.  
But these eigenvectors generate only a sublattice of $\Z^2$ with index $2$.  
Thus the limit is isomorphic to the subgroup $G$ of $\Q^2$ that is generated by the elements $\{ (1/3^n,0) \ | \ n\in \Z\} \cup \{ (0,1) \} \cup \{ (1/2,1/2)\}$ and that has positive cone $G^+ = \{ (a,b)\in G \ | \ a > 0\} \cup \{ 0\}$.  
This dimension group $G$ is clearly ultrasimplicial as $A$ is non-singular.  

The group in Example \ref{EX:elliott} is a finite-index order subgroup of $G$, and, indeed, the two are isomorphic as groups.  
The difference here is that the exact sequence associated to the unique state fails to split.  
This shows that it is not just the group structure that determines ultrasimpliciality in these examples, but also the order structure arising from the state.  
\end{example}

\begin{example}\label{EX:finite-index-subgroup}
More generally, if $H = \Z\big[\frac{1}{m_0}\big]$ and $K = \Z\big[\frac{1}{m_1}\big]$ with $m_0>m_1$, then $G$ is isomorphic to a finite-index order subgroup of an ultrasimplicial group.  
We can represent such a group as an inductive limit using the diagram 

\centerline{
\xymatrix{
\Z^2 \ar@{>}^{A}[r] & \Z^2 \ar@{>}^{A}[r] & \Z^2 \ar@{>}^{A}[r] & \cdots 
}
}

with
\begin{align*}
A & = \left[ \begin{array}{cc}
\lfloor \frac{m_0+m_1}{2}\rfloor & \lceil \frac{m_0-m_1}{2}\rceil \\
\lfloor \frac{m_0-m_1}{2}\rfloor & \lceil \frac{m_0+m_1}{2}\rceil 
\end{array}\right],
\end{align*}
where $\lfloor \cdot\rfloor$ and $\lceil \cdot \rceil$ denote the floor and ceiling functions respectively.  

Note that the requirement that $m_0$ be greater than $m_1$ is easy to circumvent because $\Z\big[\frac{1}{m_0}\big] = \Z\big[\frac{1}{m_0^n}\big]$ for any $n\in\N$.  
\end{example}

\bibliographystyle{abbrv}
\bibliography{ultrasimplicial}

\end{document}